\documentclass[12pt,a4paper]{amsart}
\usepackage{amsmath,amssymb, amsbsy}
\usepackage{color,psfrag}
\usepackage[dvips]{graphicx}
\usepackage[dvipsnames]{xcolor}
\usepackage{enumerate}
\textheight237mm \textwidth165mm \topmargin-10mm \hoffset-19mm

\usepackage{stackrel}
\usepackage[bookmarks=false]{hyperref}

\usepackage{mathtools}
\mathtoolsset{showonlyrefs}

\renewcommand{\P}{\mathbb{P}}

\newcommand{\R}{{\mathbb{R}}}

\newcommand{\N}{{\mathbb N}}

\newcommand{\ieq}{\begin{equation}}
\newcommand{\eeq}{\end{equation}}
\newcommand{\ieqa}{\begin{eqnarray}}
\newcommand{\eeqa}{\end{eqnarray}}
\newcommand{\ieqas}{\begin{eqnarray*}}
\newcommand{\eeqas}{\end{eqnarray*}}


\theoremstyle{plain}
\newtheorem{theorem}{Theorem} [section]
\newtheorem{corollary}[theorem]{Corollary}
\newtheorem{lemma}[theorem]{Lemma}

\def\neweq#1{\begin{equation}\label{#1}}
\def\endeq{\end{equation}}


\theoremstyle{definition}
\newtheorem{definition}[theorem]{Definition}
\newtheorem{remark}[theorem]{Remark}

\numberwithin{figure}{section}

\def\F{{\mathcal F}}

\def\F{{\mathcal F}}
\def\G{{\mathcal G}}

\newcommand{\average}{{\mathchoice {\kern1ex\vcenter{\hrule
height.4pt width 6pt depth0pt} \kern-11pt} {\kern1ex\vcenter{\hrule height.4pt width 4.3pt depth0pt} \kern-7pt} {} {} }}

\usepackage{graphicx}
\usepackage{subfigure}
\graphicspath{
{Images/}
{Images/ImagesInsiderRan/}
{Images/ImagesCEC/}
{ImagesAPO/}
}


\begin{document}

\title[]{Optimal investment with insider information using Skorokhod \& Russo-Vallois integration}

\author[M. Elizalde, C. Escudero, T. Ichiba]{Mauricio Elizalde, Carlos Escudero, Tomoyuki Ichiba}
\address{}
\email{}

\keywords{Insider trading, anticipating calculus, portfolio optimization, Russo-Vallois forward integral, Skorokhod integral, Malliavin derivative, anticipative Girsanov transformations.
\\ \indent 2020 {\it MSC: 60H05; 60H07; 60H10; 60H30; 91G10.}
\\ \indent {\it JEL: C02, C61, G11.}}

\maketitle


\vskip2mm
\noindent
{\footnotesize

Mauricio Elizalde:
Departamento de Matem\'aticas Fundamentales.
Universidad Nacional de Educaci\'on a Distancia.
ORCID 0000-0002-3275-5884,
{\tt melizalde@mat.uned.es}

Carlos Escudero:
Departamento de Matem\'aticas Fundamentales.
Universidad Nacional de Educaci\'on a Distancia.
ORCID 0000-0002-9597-0506.
{\tt cescudero@mat.uned.es}

Tomoyuki Ichiba:
Department of Statistics \& Applied Probability.
University of California Santa Barbara.
ORCID 0000-0003-2787-7569.
{\tt ichiba@pstat.ucsb.edu}
\vskip3mm\noindent
}






{\bf Abstract.} We study the maximization of the logarithmic utility for an insider with different anticipating techniques. Our aim is to compare the utilization of Russo-Vallois forward integral and Skorokhod integral in this context.
Theoretical analysis and illustrative numerical examples showcase that the Skorokhod insider outperforms the forward insider. This remarkable observation stands in contrast to the scenario involving risk-neutral traders. Furthermore, an ordinary trader could surpass both insiders if a significant negative fluctuation in the driving stochastic process leads to a sufficiently negative final value. These findings underline the intricate interplay between anticipating stochastic calculus and nonlinear utilities, which may yield non-intuitive results from the financial viewpoint.


\section{Introduction}
\label{Introduction}

The development of investment strategies with insider information is an ongoing topic of financial mathematics (\cite{pikovsky1996anticipative}, \cite{imkeller2001free}, \cite{leon2003anticipating}, \cite{corcuera2004}, \cite{biagini2005}, \cite{kohatsu2007}, \cite{di2009malliavin},  \cite{draouil2015donsker}), and it is strongly connected to advancements in the stochastic analysis theory, like Malliavin calculus (\cite{nualart2006malliavin}, \cite{nualart2018introduction}), or anticipative integration and anticipative transformations (\cite{skorokhod1976}, \cite{buckdahn1989}, \cite{russo1993forward}, \cite{buckdahn1994}).
We aim to explore various interpretations of noise within the insider wealth dynamics and subsequently compare the outcomes to determine which interpretation aligns more closely with economic viability, and, in more generality, to derive consequences for financial modeling.

In this work, we compare the usage of Skorokhod \cite{skorokhod1976} and Russo-Vallois forward integration \cite{russo1993forward} in the situation that a trader has insider information about the future price of a given stock and desires to maximize her expected utility under a logarithmic risk aversion. She invests in a portfolio consisting of the stock and a risk-free asset until the time horizon $T$, which corresponds to the time of the privileged information. We assume the trader cannot influence the market prices.

We are inspired by the work of Escudero \cite{escudero2018}, who addresses the problem of insider trading with one-period investment and without considering any utility function to model risk aversion (i.e., the traders are assumed to be risk-neutral). The author in \cite{escudero2018} compares the usage of Skorokhod and Russo-Vallois forward integration and concludes that the usage of the Russo-Vallois forward integral is more meaningful from the financial point of view because the expected utility of the insider trader under Skorokhod integration is less than that of an ordinary trader. On the contrary, the expected wealth under forward integration is bigger than that of the ordinary trader. By ordinary trader, we mean a trader who has no more information than the present and historical prices of the stock. Obviously, this work leaves open the case of a nonlinear utility; and this is precisely the question we will address herein:
how the logarithmic utility interacts with both types of stochastic calculi. Surprisingly, the present results differ from those in \cite{escudero2018}.

The outline of the paper is as follows. First, we develop the case in which the trader knows the exact value of the driving process of the stock price at the horizon time.  Karatzas and Pikovsky \cite{pikovsky1996anticipative} face this problem using a Brownian bridge with It\^{o} integration and enlargement of filtration. In this work, we also start with an example of how to handle the problem using a Brownian bridge with It\^{o} integration in Section \ref{APO_BB}. This case is devoted to show the consistency of our approach. Then, we continue with the usage of Russo-Vallois forward integration, based in {\O}ksendal and R{\o}se
work \cite{oksendal2017}, in Section \ref{APO_fw}, and Skorokhod integration in Section \ref{APO_sk}. To handle the solution of the related stochastic differential equation in the Skorokhod scheme, we consider the anticipative Girsanov transformations studied by Buckdahn in \cite{buckdahn1989} and \cite{buckdahn1994}. We exemplify an investment assuming that a trader has insider information about the prices of the 2-Year U.S. Treasury Note Future, and conclude that with Skorokhod integration, the trader has more expected wealth. We also illustrate this fact with simulations of the Brownian paths that drive the risky asset and comparing the wealth of investments under these two types of anticipative integration.

Later on, in Section \ref{Distribution} we compare the expected differences in wealth of the investor when using forward and Skorokhod schemes in the presence of some uncertainty. We find that the expected value of the wealth under Skorokhod integration is bigger than its expected value under Russo-Vallois forward integration. Therefore, all of our current results point to the opposite direction than those in \cite{escudero2018} (with, of course, different
hypotheses on risk aversion).

For the setting, we work on a probability space, $(\Omega,\mathcal{F},\P)$, equipped with $\F=\{\mathcal{F}_t\}_{0\leq t\leq T}$, the natural filtration of the Brownian motion $W_t$, $0\leq t\leq T$ for some $T>0$. The investment consists of a portfolio with a risk-free asset $R_t$ modeled by
\begin{equation}
dR_t=R_t  r_t  dt, \ \ R_0>0, \ \ t\in[0,T],
\label{risk-free_model}
\end{equation}
where $r_t$ is the risk-free instantaneous rate; and the risky asset $S_t$, for which the trader has the mentioned information, modeled by a geometric Brownian motion
\begin{equation}
dS_t=S_t \mu_t dt + S_t \sigma_t dB_t,\ \ S_0>0,\ \ t\in[0,T],
\label{risky_model}
\end{equation}
where $\mu_t$ is the appreciation rate and $\sigma_t$ is the volatility of this risky asset, and $B_t$ is a suitable driving process of $S_t$, that depends on the standard Brownian motion $W_t$. For each method we consider, we make a suitable choice for $B_t$. In the case of the ordinary trader, we simply use $B_t = W_t$. The alternative choice for the insider will be justified in section \ref{APO_BB}. We denote the proportion of the total wealth of the trader $X_t$ invested in the stock at time $t$ by $\pi_t$; hereafter we will refer to this proportion as the ``portfolio''. Therefore, as a consequence of the self-financing condition,
the stochastic differential equation (SDE) for the wealth process of the trader is
\begin{equation}
\begin{array}{ll}
	dX_t &= (1-\pi_t) X_t \ r_t \ dt + \pi_t ( X_t \ \mu_t \ dt + X_t \ \sigma_t \ dB_t ) \\
	&= X_t(\mu_t \ \pi_t + r_t (1-\pi_t)) dt + \sigma_t \ \pi_t X_t \ dB_t,
\end{array}
\label{SDE}
\end{equation}

\noindent
with initial condition $X_0 \in \mathbb{R}^+$ ($\mathbb{R}^+:=(0, \infty)$). We will eventually assume that no short selling is allowed, i.e. the value of $\pi_t$ is between 0 and 1, since
this condition will be necessary in order to find an interpretation of the optimal portfolio for the Skorokhod integral, although this is not needed for the Russo-Vallois forward integral.

We denote by $X_t^{\pi}$ the wealth of the trader under the portfolio $\pi$. Our goal is to find the optimal portfolio $\pi_t^*$ that maximizes the expected logarithmic terminal wealth at time $T$,
\begin{equation}
\pi_t^* := \arg\max \mathbb{E} \left[ \log X^{\pi}_{T}\right].
\label{pi}
\end{equation}

In his celebrated work \cite{merton1969}, Merton shows, with It\^o integration, that if the driving process is the standard Brownian motion $W_t$ and without more information than the historical prices, i.e. under the filtration $\mathcal{F}_t, \ 0\leq t  \leq T$, the optimal value $\pi_t^*$ that maximizes the expected logarithmic terminal wealth, $\log X_{T}^{\pi}$, is
\begin{equation}
\pi_t^*= \dfrac{\mu_t - r_t}{\sigma_t^2},
\label{pi_Merton}
\end{equation}
making the value of the optimization problem to be
$$
\begin{array}{ll}
V_{T}^{\pi^*}
&:= \mathbb{E} \left[ \log (X^{\pi^*}_{T} / X_0) \right] \\\\
&= \mathbb{E}\displaystyle\int_0^{T} \left[\mu_t\pi_t^* + r_t (1-\pi_t^*)-\dfrac{1}{2}\sigma_t^2\pi_t^{*^2} \right] dt \\\\
&= \mathbb{E}\displaystyle\int_0^{T} \left[ r_t + \dfrac{1}{2}  \left(  \dfrac{\mu_t-r_t}{\sigma_t} \right)^2 \  \right] dt,
\end{array}
$$
under appropriate conditions on $\pi_t$, $\mu_t$, $r_t$, and $\sigma_t$.
This result gives us a benchmark to compare the value of the problem for an ordinary trader, who does not have privileged information, with the value of an insider trader. For the sake of simplicity, from now on we will always assume $X_0 \in \mathbb{R}^+$ (i.e. it is a positive constant).

\section{Stochastic Analysis for Anticipative Processes}
\label{Stochastic}

In this section, we provide the definitions and results within the field of stochastic analysis we need to apply for the portfolio optimization with anticipating information.

For a given time horizon $T>0$, we work on a Wiener space $(\Omega, \mathcal{F},\mathbb{P})$ over the sample space of continuous functions over $[0,T]$, where $\mathcal{F}\ $ is the smallest Borel sigma-algebra that contains $\Omega$, and $\mathbb{P}$ is a Wiener measure under which the canonical process $W_t(\omega)=\omega(t)=\omega_t$, $0\leq t \leq T$, is a standard Brownian motion. We let $L_2(\Omega)$ denote the space of the square-integrable random variables on $\Omega$.

\subsection{The Malliavin Derivative}
\label{Malliavin}
$ $

Let $\mathcal{S}$ be the space of smooth Wiener functionals in the sense that if a random variable $F$ belongs to $\mathcal{S}$, there exists $n\in\N$ and $n$ time points $t_1,...,t_n$ with   $0\leqslant t_1,...,t_n\leqslant T$ and a smooth bounded function $ f\in C^{\infty}(\R^n)$  such that $F$ is represented as  $F=f(W_{t_1},...,W_{t_n})=f(\omega_{t_1},...,\omega{t_n})$.

For every smooth Wiener functional $F$ in $\mathcal{S}$, we define the unbounded linear operator $D: L_2(\Omega)\rightarrow L_2([0,T]\times\Omega)$, as in \cite{buckdahn1994}, given by
\begin{equation}
D_t F = \sum^n_{i=1} \dfrac{\partial f}{\partial x_i}(\omega_{t_1},...,\omega_{t_n})\cdot \mathbb{I}_{[0,t_i)}(t),\  0\leqslant t_k \leqslant T,
\label{Malliavin_derivative}
\end{equation}
where $\mathbb{I}_A(\cdot)$ is the characteristic function of the set $A$ such that $\mathbb{I}_A (t) = 1$ if $t \in A$ and $\mathbb{I}_A (t) = 0$ otherwise. $D_t F$ is called the Malliavin derivative of $F$ at $(t,\omega) \in [0,T]\times \Omega$. In general, we define the $k$-th derivative of $F$,
for $k \geq 1$, $0 \leq s_1,...,s_k \leq T$, as:

$$D^k_{s_1,...,s_k} F = D_{s_1},...,D_{s_k} F.$$

The mapping $D$ is a closable unbounded linear operator from $L^2(\Omega)$ into $L^2([0,T]\times \Omega)$ (see \cite{buckdahn1994}). We identify $D$ with its closed extension and denote its domain by $\mathbb{D}_{1,2}$. For any $k\geqslant 1$, $2 \leqslant p < \infty$, we introduce the spaces $\mathbb{D}_{k,p}$ as the closure of $\mathcal{S}$ with respect to the Sobolev norm (\cite{nualart2006malliavin} chapter 1.5)
$$
\parallel F \parallel_{k,p} \,\,\, := \,\,\, \parallel F \parallel_p + \Big \lVert \left( \int_{[0,T]^k} \mid D^k_z F \mid^2 dz \right)^{1/2} \Big \rVert_p , \ F \in \mathcal{S},
$$
where $\parallel \cdot \parallel_p$ is the $L^p(\Omega)$ norm.
The concept of the Malliavin derivative leads us to define the Skorokhod integral in the following section.

\subsection{The Skorokhod Integral}
\label{Skorokhod}
$ $

Skorokhod (\cite{skorokhod1976}) introduced in 1976 a generalization of the It\^o integral that coincides with the adjoint operator $\delta: L_2([0,T]\times\Omega)\rightarrow L_2(\Omega)$ of the derivative operator $D$ in the following sense: the domain Dom($\delta$) of the operator $\delta$ is the set of processes $u\in L_2([0,T]\times\Omega)$ for which there exists a random variable $G^u \in L_2(\Omega)$ satisfying the adjoint relationship
\begin{equation}
\mathbb{E}[G^uF] = \mathbb{E} \Big[\int_0^{T} u_sD_sF ds \Big],
\label{Skorokhod_relationship}
\end{equation}
for all $F\in \mathcal{S}$. The random variable $G^u$ is uniquely determined in $L_2(\Omega)$ for every $u$, and is called the Skorokhod integral of $u_. \in \textup{Dom }(\delta)$, and denoted by
$\delta(u):=G^u$. We also use the notation:
\begin{equation}
\delta (u)
:= \int_0^T u_s \delta W_s.
\end{equation}
Note that the left-hand side of the equality in the adjoint relationship is the inner product of $L^2(\Omega)$, so the Skorokhod integral is non-local in $\Omega$; nevertheless $\delta(u)$ is a well-defined squared integrable random variable, and hence it is well-defined almost surely. It is worth to mention that this integral admits a characterization in terms of a Riemann sum approximation; see section 3.1.1 of \cite{nualart2006malliavin} for the details.

\subsection{Anticipative Girsanov Transformations}
\label{Anticipative}
$ $

The anticipative Girsanov transformations allow us to solve stochastic differential equations of the form
\begin{equation}
X_t = X_0 + \int_0^t \hat{\mu}_s X_s ds + \int_0^t \hat{\sigma}_s X_s \delta W_s,\ \ 0\leqslant t\leqslant T,
\label{integral_SDE_sk}
\end{equation}
where $\delta$ denotes Skorokhod integration. As always, $X_0 \in \mathbb{R}^+$.

In \cite{buckdahn1989}, Buckdahn proves that, under the assumptions $\hat{\sigma_t}\in L_\infty ([0,T])$ and $\hat{\mu_t}\in L_\infty ([0,T]\times \Omega)$, $ 0\leqslant t\leqslant T$, equation \eqref{integral_SDE_sk} admits a unique solution in the following sense:
%
\begin{itemize}
    \item [(i)]$\mathbb{I}_{[0,t]}\hat{\sigma}X \in \textup{Dom}(\delta),\ $ and
    \item [(ii)] the Skorokhod integral satisfies $\mathbb{P}$-a.s.
    $$\delta(\mathbb{I}_{[0,t]}\hat{\sigma}X) =
    \displaystyle\int_0^t\hat{\sigma}_s X_s \delta W_s = X_t - X_0-\int_0^t\hat{\mu}_s X_sds;\ \ 0\leqslant t\leqslant T.$$
\end{itemize}

The derivation of this solution is based on a change of measure technique adapted to anticipative settings. Below, we outline the transformation used in this approach.

For a deterministic process $\hat{\sigma_t}\in L_\infty ([0,T])$, we define the family of transformations $U_{s,t} : \Omega \rightarrow \Omega, \ 0\leqslant s \leqslant t \leqslant T$, of $\omega\in\Omega$, shifted with respect to $\mathbb{I}_{[s,t]}(r)\cdot \hat{\sigma}_r$, given by
\begin{equation}
U_{s,t} : \{\omega_v, 0\leqslant v\leqslant T\} \longmapsto \Bigg\{ U_{s,t} \ \omega_v:= \omega_v - \displaystyle\int_0^v \mathbb{I}_{[s,t]}(r) \hat{\sigma}_r dr,\ 0\leqslant v\leqslant T\Bigg\}.
\label{U_st}
\end{equation}
This transformation corresponds to a pathwise shift of the Brownian trajectories, which allows the stochastic integrals to admit a well-defined solution in the Skorokhod sense, as established in~\cite{buckdahn1989}.

We let $T_{s,t}$ denote the inverse transformation of $U_{s,t}$, given by
\begin{equation}
\omega. = T_{s,t} \circ U_{s,t} \ \omega. = (U_{s,t} \ \omega). + \displaystyle\int_0^\cdot \mathbb{I}_{[s,t]}(r) \cdot \hat{\sigma}_r dr,\ \omega\in\Omega,
\label{U_inverse}
\end{equation}
for every fixed $0\leqslant s\leqslant t\leqslant T.$ For ease of notation, we write $U_t = U_{0,t},\  T_t = T_{0,t}$. And we use $T_s U_t \ \omega= U_{s,t} \ \omega$ for $\hat{\sigma_t} \in L_\infty ([0,T])$, $0\leqslant s\leqslant t\leqslant T$, since
$$
\begin{array}{lll}
T_sU_t \ \omega. &= T_s \Bigg( \omega. - \displaystyle\int_0^\cdot \mathbb{I}_{[0,t]} (r) \hat{\sigma}_r dr \Bigg) = \Bigg[ \omega. - \int_0^\cdot \mathbb{I}_{[0,t]} (r) \hat{\sigma}_r dr \Bigg] + \int_0^\cdot \mathbb{I}_{[0,s]} (r) \hat{\sigma}_r dr
\\\\
&= \omega. - \displaystyle\int_0^\cdot \mathbb{I}_{[s,t]} (r) \hat{\sigma}_r dr - \int_0^\cdot \mathbb{I}_{[0,s]} (r) \hat{\sigma}_r dr + \int_0^\cdot \mathbb{I}_{[0,s]} (r) \hat{\sigma}_r dr
\\\\
&=U_{s,t}  \ \omega.
\end{array}
$$

The solution of  \eqref{integral_SDE_sk}, given by Buckdahn in \cite{buckdahn1989}, is represented by
\begin{equation}
X_t = X_0 \cdot \exp\Bigg\{\int_0^t\hat{\mu}_s(U_{s,t})ds\Bigg\}L_t,\ \mathbb{P}- \textup{ a.s., } 0\leqslant t\leqslant T,\\ \label{sol_sk}
\end{equation}
where $L_t = \exp\Bigg\{ \displaystyle\int_0^t \hat{\sigma}_s \delta W_s - \dfrac{1}{2} \int_0^t\hat{\sigma}_s^2 ds \Bigg\}$.

Later, in \cite{buckdahn1994}, Buckdahn defines, for stochastic $\hat{\sigma}_\cdot$, the transformation
 $U'_{s,t} : \Omega \rightarrow \Omega$,  $0\leqslant s \leqslant t \leqslant T$, of $\omega\in\Omega$, shifted with respect to $\mathbb{I}_{[s,t]}(r)\cdot \hat{\sigma}_r(U'_{r,t} \ \omega)$, given by
\begin{equation}
U'_{s,t} \ \omega. = \omega. - \int_0^{\cdot} \mathbb{I}_{[s,t]}(r)\cdot \hat{\sigma_r} (U'_{r,t} \ \omega.)dr,
\label{U_st_stochastic}
\end{equation}
and shows that the linear SDE (\ref{integral_SDE_sk}) with $\hat{\sigma_t}\in L_2 ([0,T]\times \Omega)$ and $\hat{\mu_t}\in L_\infty ([0,T]\times \Omega)$,
has the unique solution
\begin{equation}
X_t = X_0 \cdot \exp\Bigg\{\int_0^t \hat{\mu}_s (U'_{s,t})ds \Bigg\}L'_t\  , \ \mathbb{P}- \textup{ a.s., } 0\leqslant t\leqslant T,\\
\label{sol_sk_stochastic}
\end{equation}
where
\begin{equation}
\begin{array}{ll}
L'_t &=  \exp\Bigg\{ \displaystyle\int_0^t \hat{\sigma}_s (U'_{s,t}) \delta W_s - \frac{1}{2} \int_0^t \hat{\sigma}_s (U'_{s,t})^2 ds
\\
&- \displaystyle\int_0^t \int_s^t (D_u \hat{\sigma}_s) (U'_{s,t}) D_s [\hat{\sigma}_u (U'_{s,t})] \ du \ ds \Bigg\}.
\end{array}
\label{Lt_stochastic}
\end{equation}

The last expressions are closed for deterministic $\hat{\sigma_t}$, but not for stochastic $\hat{\sigma_t}$.

\subsection{The Russo-Vallois Forward Integral}
\label{Forward}
$ $

Russo and Vallois define in 1993 a forward integral with respect to Brownian motion
by an approximation procedure \cite{russo1993forward}.

\begin{definition}
A stochastic process $\phi_t,\ t\in [0,T] $, is said to be forward integrable in the weak sense with respect to a standard Brownian motion $W_t$,
if there exists another stochastic process $I_t$ such that
\begin{equation}
\stackbin[0\leqslant t \leqslant T]{}{\sup} \ \Bigg\vert \int_0^t \phi_s \, \dfrac{W_{s+\epsilon}-W_s}{\epsilon}\ ds - I_t\Bigg\vert \rightarrow 0\ ,\ \ \epsilon\rightarrow 0^+
\label{Forward_integral}
\end{equation}
in probability. If such a process exists, we denote
$$
I_t:= \int_0^t \phi_s \, d^-W_s,\ t\in[0,T],
$$
the Russo-Vallois forward integral of $\phi_t$ with respect to $W_t$ over $[0,T]$.
\end{definition}

This forward integral is an extension of the It\^o integral. If $\phi$ is adapted to the filtration $\mathcal F_t$ and It\^o integrable, then $\phi$ is forward integrable and its forward integral coincides with its It\^o integral. The proof of this statement is in \cite{russo1993forward}.

A Russo-Vallois forward process (with respect to $W_t$) is a stochastic process of the form
$$X_t = x + \int_0^t u_sds + \int_0^t v_sd^-W_s,\ \ t\in[0,T] ,$$
where $\int_0^T \vert u_t\vert ds<\infty \, a.s.$ and $v$ is a forward integrable stochastic process. A shorthand notation for this is
$$d^-X_t = u_t dt + v_td^-W_t. $$

We present the It\^o formula for Russo-Vallois forward integrals as stated in \cite{di2009malliavin}, page 136.
See also \cite{russo1995generalized} and \cite{flandoli2002generalized}.

\begin{theorem}
\label{ito_lemma_fw}
Let $X_t$, $t\in[0,T]$, be a Russo-Vallois forward process defined as above and  let $\mathit{f}\in C^{1,2} \left( [0,T]\times \mathbb{R} \right)$. Define $Y(t)=\mathit{f}(t,X_t)$. Then, $Y_t$ is a forward process and
$$d^-Y_t = \dfrac{\partial\mathit{f}}{\partial t}(t,X_t)dt\ +\ \dfrac{\partial\mathit{f}}{\partial x}(t,X_t)d^-X_t\ +\ \dfrac{1}{2}\dfrac{\partial^2\mathit{f}}{\partial x^2}(t,X_t)v^2_t dt.$$
\end{theorem}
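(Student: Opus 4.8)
The plan is to prove the equivalent integral identity
\begin{equation*}
f(t,X_t) - f(0,X_0) = \int_0^t \frac{\partial f}{\partial s}(s,X_s)\,ds + \int_0^t \frac{\partial f}{\partial x}(s,X_s)\,d^-X_s + \frac{1}{2}\int_0^t \frac{\partial^2 f}{\partial x^2}(s,X_s)\,v_s^2\,ds,
\end{equation*}
where $\int_0^t \frac{\partial f}{\partial x}(s,X_s)\,d^-X_s := \int_0^t \frac{\partial f}{\partial x}(s,X_s)u_s\,ds + \int_0^t \frac{\partial f}{\partial x}(s,X_s)v_s\,d^-W_s$, and then to read off the differential form. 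Since $X$ has continuous paths and forward integrability only holds in probability, I would first reduce to the case where $f$ and its relevant derivatives are bounded: introducing localizing random times $\tau_N = \inf\{t : |X_t|\geq N \text{ or } \int_0^t(|u_s|+v_s^2)\,ds\geq N\}$, one establishes the formula on $[0,t\wedge\tau_N]$ and lets $N\to\infty$, using only that $\tau_N\uparrow T$ almost surely, which sidesteps the fact that $u,v$ need not be adapted.

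For the core argument I would fix a partition $0 = t_0 < t_1 < \cdots < t_n = t$ of mesh $|\pi|$, telescope
\begin{equation*}
f(t,X_t) - f(0,X_0) = \sum_{i=0}^{n-1} \big[ f(t_{i+1}, X_{t_{i+1}}) - f(t_i, X_{t_i}) \big],
\end{equation*}
and apply a second-order Taylor expansion in both arguments to each summand. This produces a first-order time term $\sum_i \frac{\partial f}{\partial t}(t_i, X_{t_i})\,\Delta t_i$, a first-order space term $\sum_i \frac{\partial f}{\partial x}(t_i, X_{t_i})\,\Delta X_i$, a pure second-order space term $\frac{1}{2}\sum_i \frac{\partial^2 f}{\partial x^2}(t_i, X_{t_i})\,(\Delta X_i)^2$, and mixed and higher-order remainders. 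The time term converges to $\int_0^t \frac{\partial f}{\partial s}\,ds$ by continuity of the integrand and of the paths. For the space term I would substitute $\Delta X_i = \int_{t_i}^{t_{i+1}} u_s\,ds + \int_{t_i}^{t_{i+1}} v_s\,d^-W_s$ and argue that, because the integrand is evaluated at the left endpoint $t_i$ — which is exactly the non-anticipating evaluation built into the forward difference quotient of \eqref{Forward_integral} — the sum converges in probability to $\int_0^t \frac{\partial f}{\partial x}(s,X_s)\,d^-X_s$.

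The decisive step is the second-order term. Separating $(\Delta X_i)^2$ into its $ds$--$ds$, $ds$--$d^-W$, and $d^-W$--$d^-W$ contributions, the first two vanish in the limit (being controlled by $|\pi|$ times an integrable quantity and by a vanishing cross term), whereas the last must be shown to satisfy
\begin{equation*}
\sum_{i=0}^{n-1} \frac{\partial^2 f}{\partial x^2}(t_i, X_{t_i}) \Big( \int_{t_i}^{t_{i+1}} v_s\,d^-W_s \Big)^2 \longrightarrow \int_0^t \frac{\partial^2 f}{\partial x^2}(s,X_s)\,v_s^2\,ds
\end{equation*}
in probability; this is precisely the assertion that the forward integral $\int_0^\cdot v_s\,d^-W_s$ carries the same quadratic variation $\int_0^\cdot v_s^2\,ds$ as the Itô integral. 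I expect this to be the main obstacle, for two reasons: the integrands $u,v$ need not be adapted, so the Itô isometry and martingale tools are unavailable and one must work pathwise and in probability through the defining regularization; and one must reconcile the Riemann-sum discretization above with the $\epsilon$-mollifier definition of the forward integral, verifying that the two notions of convergence are compatible. Once the quadratic-variation identity is in hand and the remainder terms are shown to be negligible as $|\pi|\to 0$, collecting the three limits yields the integral identity and hence the claimed forward Itô formula.
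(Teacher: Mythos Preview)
The paper does not prove this theorem; it merely states it and refers the reader to \cite{di2009malliavin} for the proof. So there is no in-paper argument to compare against, only the cited source.

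Your outline follows the classical partition-and-Taylor route used for the It\^o formula, and the logical structure is sound. However, it sits uneasily with the definition of the forward integral adopted here, which is a regularization limit \eqref{Forward_integral} rather than a limit of Riemann sums along partitions. You flag this yourself (``one must reconcile the Riemann-sum discretization above with the $\epsilon$-mollifier definition''), but you do not indicate how to carry it out; in general there is no reason the two limiting procedures should commute without extra hypotheses, and this is precisely where a partition-based argument for forward processes tends to break down. The proof in \cite{di2009malliavin} (following Russo--Vallois) avoids this entirely by working directly with the $\epsilon$-regularization: one Taylor-expands $f(s,X_{s+\epsilon})-f(s,X_s)$ pointwise in $s$, divides by $\epsilon$, integrates in $s$, and passes to the limit $\epsilon\to 0^+$. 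The second-order term then converges by the very definition of the forward quadratic variation (covariation) $[X,X]_t=\int_0^t v_s^2\,ds$, so the ``main obstacle'' you identify dissolves into a definition rather than a separate lemma. If you wish to make your sketch into a proof, the cleanest fix is to abandon the partition and run the regularization argument instead.
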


In order to have a Riemann sum interpretation of the Russo-Vallois forward integral, take a partition $J_n$ of $0 = t_0<t_1 < ... < t_{J_n} = T$ of $[0,T]$. Assume $\varphi$ is \textit{c\`agl\`ad} (i.e. left continuous with right limits) and forward integrable, and moreover a simple stochastic process, meaning that
$$
\varphi(t) = \sum^{J_n}_{j=1} \varphi (t_{j-1})	\chi_{(t_{j-1},t_j]}(t),\ \ t \in [0,T],
$$
then, the following identity
$$
\int^T_0 \varphi (s)d^- W(s) = \stackbin[\Delta t\rightarrow 0]{}{\lim} \sum^{J_n}_{j=1} \varphi (t_{j-1})(W(t_j)-W(t_{j-1})),
$$
holds with convergence in probability, where $\Delta t := \max_{j=1,...,J_n} (t_j - t_{j-1}) \longrightarrow 0$ as $\ n \longrightarrow \infty$.
For details, see, for instance, reference \cite{biagini2005}.

Based on the previous convergence, when the integrand is adapted, the Riemann sums serve as an approximation to the It\^o integral with respect to
Brownian motion. Consequently, the Russo-Vallois forward integral and the It\^o integral coincide. Therefore, we can interpret this integral as an extension of the It\^o integral to the anticipating context. Note that this property is shared with the Skorokhod integral, although it constitutes a different extension.


\noindent\textbf{Relationship between Forward and Skorokhod Integration.}

There is a relation between the Russo-Vallois forward integral and Skorokhod integral that allows to compute forward integrals in terms of the Skorokhod integrals and
Malliavin derivatives. To make it explicit, we follow Chapter 8 in \cite{di2009malliavin} to introduce the definition of the forward integral in the strong sense and the class of stochastic processes $\mathbb{D}_0$. Subsequently, we state this relation along with a consequence of it.

\begin{definition}\label{defdo}
The class $\mathbb{D}_0$ consists of all measurable stochastic processes $\varphi$ such that

\begin{itemize}
\item[1)] the trajectories $\varphi(\cdot , \omega):t\longrightarrow \varphi(t , \omega)$ are \textit{c\` agl\` ad} a.s.

\item[2)] the random variables $\varphi (t) \in \mathbb{D}_{1,2}$ for all $t\in [0,T]$.

\item[3)] the trajectories $t \longrightarrow D_s\varphi (t)(\omega)$ are \textit{c\` agl\` ad} for almost every $s \in [0,T]$ a.s.

\item[4)] the limit $D_{t^+}\varphi(t):=\lim_{s\rightarrow t^+} D_s \varphi(t)$ exists with convergence in $L^2(\mathbb{P})$.

\item[5)] $\varphi$ is Skorokhod integrable.
\end{itemize}
\end{definition}

\begin{definition}
A stochastic process $\varphi_t,\ t\in [0,T] $, is said to be forward integrable in the strong sense with respect to a standard Brownian motion $W_t$ if the limit
$$
\lim_{\varepsilon\rightarrow 0^+} \int_0^T \varphi (t) \dfrac{W(t+\varepsilon) - W(t)}{\varepsilon} dt
$$
exists in $L^2(\mathbb{P})$.
\end{definition}

The above mentioned relation reads as follows:

\begin{theorem} Let $\varphi$ be a process in $\mathbb{D}_0$. Then, $\varphi$ is forward integrable  in the strong sense and moreover
$$
\int_0^T \varphi(t)d^- W(t) = \int_0^T \varphi(t)\delta W(t) + \int_0^T D_{t^+} \varphi(t) dt.
$$
\end{theorem}

By the zero-mean property of the Skorokhod integral we get the immediate consequence:

\begin{corollary} Let $\varphi$ be a process in $\mathbb{D}_0$. Then
$$
\mathbb{E} \left[ \int_0^T \varphi(t)d^-W(t) \right] = \mathbb{E} \left[ \int_0^T D_{t^+} \varphi(t) dt \right].
$$
\label{corollary_forward}
\end{corollary}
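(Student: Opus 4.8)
The plan is to derive the stated identity directly from the equality in the preceding theorem by taking expectations on both sides. Writing that identity as
$$
\int_0^T \varphi(t)\,d^-W(t) = \int_0^T \varphi(t)\,\delta W(t) + \int_0^T D_{t+}\varphi(t)\,dt,
$$
the corollary will follow at once provided one knows that the Skorokhod integral has zero mean; this mean-zero property is the single fact the argument really rests on.

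First I would check that passing to expectations is justified. Since $\varphi$ is forward integrable in the strong sense, the forward integral $\int_0^T \varphi(t)\,d^-W(t)$ is an $L^2(\P)$-limit and hence lies in $L^1(\Omega)$, and since $\varphi$ is Skorokhod integrable we have $\delta(\varphi)\in L_2(\Omega)\subset L^1(\Omega)$. The Lebesgue integral $\int_0^T D_{t+}\varphi(t)\,dt$ is then integrable as the difference of two integrable random variables, so all three expectations are finite and the linearity of expectation may be applied.

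The heart of the matter is to show $\mathbb{E}[\delta(\varphi)]=0$. For this I would invoke the duality relationship \eqref{Skorokhod_relationship} that defines the Skorokhod integral, specialized to the constant test functional $F\equiv 1$. This $F$ belongs to the space $S$ of smooth Wiener functionals, being represented by $f\equiv 1\in C^\infty(\R)$, and by the definition \eqref{Malliavin_derivative} its Malliavin derivative satisfies $D_s F = 0$ for every $s\in[0,T]$. The adjoint relationship then gives
$$
\mathbb{E}[\delta(\varphi)] = \mathbb{E}[\delta(\varphi)\cdot 1] = \mathbb{E}\Big[\int_0^T \varphi_s\, D_s 1\,ds\Big] = 0.
$$

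Inserting this into the expectation of the theorem's identity immediately yields
$$
\mathbb{E}\Big[\int_0^T \varphi(t)\,d^-W(t)\Big] = \mathbb{E}\Big[\int_0^T D_{t+}\varphi(t)\,dt\Big].
$$
I do not expect a genuine obstacle here, as the statement is a straightforward corollary of the theorem; the only delicate point is the vanishing of the Skorokhod integral's expectation, which is nothing more than the defining duality evaluated at the constant functional, together with the routine integrability check that legitimizes taking expectations.
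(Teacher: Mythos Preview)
Your argument is correct and is exactly the intended one: the paper states this corollary without proof, as an immediate consequence of the preceding theorem together with the zero-mean property of the Skorokhod integral. Your derivation of $\mathbb{E}[\delta(\varphi)]=0$ from the duality relation \eqref{Skorokhod_relationship} with the constant test functional $F\equiv 1$ is precisely the standard justification, and the integrability check you include makes the passage to expectations rigorous.
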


\subsection{Donsker Delta Function}
\label{Donsker}
$ $

In this section, we define the space of Hida distributions, which is needed to define the Donsker delta function.
To this end, we follow Chapter 6 of \cite{di2009malliavin}.

We start defining the Hermite polynomials $h_n(x)$ of degree $n$, which read
$$
h_n(x) := (-1)^n e^{\frac{1}{2}x^2} \dfrac{d^n}{dx^n} \left( e^{-\frac{1}{2}x^2} \right),\ n=0,1,2,... .
$$
Let $e_k$ be the $k$-th Hermite function defined by
$$
e_k(x):= \pi^{-\frac{1}{4}}\left(\left( k-1 \right)!\right)^{-\frac{1}{2}}e^{-\frac{1}{2}x^2} h_{k-1}\left( \sqrt{2}x \right),\ \ k=1,2,...,
$$
and additionally define
$$
\theta_k(\omega):=\langle \omega,e_k \rangle = \int_{\mathbb{R}} e_k(x)dW(x,\omega),\ \ \omega\in\Omega.
$$
Let $\mathcal{J}$ denote the set of all finite multi-indices $\alpha=\left( \alpha_1,\alpha_2,...,\alpha_m \right),\ m=1,2,...,$ of non-negative integers $\alpha_i$. We set
$$
H_{\alpha}(\omega):= \prod^m_{j=1} h_{\alpha_{j}}\left( \theta_j(\omega) \right),\ \ \omega\in\Omega,
$$
and $H_0:=1$, for $\alpha=\left( \alpha_1,...,\alpha_m \right)\in\mathcal{J},\ \alpha\neq 0$.


Let $S = S(\mathbb{R}^d)$ be the Schwartz space of rapidly decreasing $C^{\infty}(\mathbb{R}^d)$ real functions on $\mathbb{R}^d$. We define the Hida test function space $(S)$ as the space
$$
(S) = \bigcap_{k \in \mathbb{R}} (S)_k,
$$
where $f = \displaystyle\sum_{\alpha\in\mathcal{J}} a_\alpha H_\alpha \in L^2(P)$, $a_\alpha \in \mathbb{R}$, belongs to the Hida test function Hilbert space $(S)_k$ whenever
$$
\parallel f \parallel^2_k \,\,\, := \sum_{\alpha\in\mathcal{J}} \alpha ! a^2_\alpha (2\mathbb{N})^{\alpha k} < \infty,
$$
where
$$
(2\mathbb{N})^\alpha = \prod^m_{j=1} (2j)^{\alpha_j}, \textup{ for } \alpha = (\alpha_1,...,\alpha_m) \in \mathcal{J}.
$$

We define the Hida distribution space $S^{*}$ as the space:
\[
(S)^{*} = \bigcup_{q\in\mathbb{R}}(S)_{-q},
\]
equipped with the following inductive topology: a sequence $\{F_n\}_{n\in\mathbb{N}}$ converges to $F$ in $(S)^{*}$ if and only if there exists $q\in\mathbb{R}$ such that:
\[
\|F_n - F\|_{-q} \to 0 \quad \text{as } n \to \infty.
\]

For any Hida distribution $F = \sum_{\alpha\in\mathcal{J}} b_\alpha H_\alpha \in (S)^{*}$, we define its generalized expectation as:
\[
E[F] = b_0. 
\]
When $F \in L^{2}(P)$, this generalized expectation coincides with the standard probabilistic expectation, since $E[H_\alpha] = 0$ for all multi-indices $\alpha \neq 0$.

The space $(S)^{*}$ is the topological dual of the Hida test function space $(S)$. The dual pairing between $F = \sum_{\alpha} b_\alpha H_\alpha \in (S)^{*}$ and $f = \sum_{\alpha} a_\alpha H_\alpha \in (S)$ is given by:
\[
\langle F, f \rangle = \sum_{\alpha \in \mathcal{J}} \alpha! \, a_\alpha b_\alpha.
\]

We have the following continuous embeddings:
\[
(S) \subset (S)_k \subset L^{2}(P) \subset (S)_{-q} \subset (S)^{*}, \quad \text{for all } k,q > 0 . 
\]

We have introduced the adequate framework to introduce the Donsker delta function.

\begin{definition}
Let $Y : \Omega\longrightarrow\mathbb{R}$ be a random variable that belongs to the Hida distribution space $(S^*)$. Then, a continuous function
$
\delta_Y (\cdot) : \mathbb{R}\longrightarrow (S)^*
$
is called Donsker delta function of $Y$ if it has the property that
$$
\int_{\mathbb{R}} g(y)\delta_Y(y)dy = g(Y)\ a.s.
$$
for all measurable functions $g : \mathbb{R} \longrightarrow \mathbb{R}$ such that the integral converges.
Herein the integral in the left is interpreted as a Bochner integral.
\end{definition}

For explicit representations of the Donsker delta function see, for instance, Chapter 7 in \cite{di2009malliavin}.

\section{Anticipative Portfolio Optimization (APO)}
\label{APO}

We formulate the insider trading problem in a Black-Scholes market with two assets, as anticipated in the Introduction. We suppose the insider has additional information about the underlying noise at the horizon time, specifically that the driving process takes the value $b \in \R$ at time $T$; note that this value is assumed to
be a constant rather than a random variable. In order to model the insider knowledge, we use the so called generalized Brownian bridge ending in $b$
(which is constant but otherwise arbitrary) as the driving process. More precisely, we consider the conditioned Gaussian process
\begin{equation}
(B_t \vert \ B_0=0,\ B_T=b),\ \ t\in [0,T],
\label{Bridge}
\end{equation}
which is characterized by its mean $b \ t/T$ for each time $t$ and its autocorrelation function $s(1-\frac{t}{T})$ between the temporal points $s$ and $t$ for $0 \le s \le t \le T$.

A simple representation of this Brownian bridge is given by (see \cite{rogers2000diffusions}, page 86)
\begin{equation}
\bar{B}_t = W_t - (W_{T}-b)\dfrac{t}{T},\ t\in [0,T],
\label{easyBB}
\end{equation}
where $W_t$ stands for the standard Brownian motion. The SDE for \eqref{easyBB} is
\begin{equation}
d\bar{B}_t = dW_t - \dfrac{W_{T}-b}{T} dt,\ t\in[0,T],  \ \bar{B}_0=0.
\label{easyBB_dB}
\end{equation}
We use this representation in the forward and Skorokhod schemes later on. Another representation of the generalized Brownian bridge is given by
(see \cite{klebaner2005}, page 132)
\begin{equation}
\widehat B_t = \int_0^{t}  \frac{T-t}{T-s} dW_s  +b \frac{t}{T}, t\in[0,T),
\label{integralBB}
\end{equation}
which we use for the first example of APO, and that satisfies the following SDE
\begin{equation}
d\widehat B_t = dW_t-\frac{\widehat B_t-b}{T-t}\ dt\ ,\ \ t\in[0,T),\ \ \widehat B_0=0.
\label{integralBB_dB}
\end{equation}
Note that this equation has all terms adapted, unlike equation~\eqref{easyBB_dB}; in particular, these equations give rise to two stochastic processes that share the same law, but cannot be compared at the pathwise level. Although at this time both SDEs can be interpreted samplewise, given the
additive nature of their noise, this fact will be crucial in the following sections. Indeed, it will allow us to compare the different notions
of anticipating stochastic calculus in the present financial context.

To optimize the portfolio for each integral, we first give the driving process, then solve the resulting SDE, afterwards compute the portfolio that maximizes the value of the problem, and finally compute this value with the detected optimal portfolio.
This, in particular, allows us to compare the different results that arise from the two notions of anticipating stochastic integration.
We start with the simplest case, which shows that modeling the driving stochastic process as a Brownian bridge reproduces the classical results on
the subject, see \cite{pikovsky1996anticipative}.

\subsection{APO with Brownian Bridge}
\label{APO_BB}
$ $

We start with an example of how the problem can be handled using the representation of the generalized Brownian bridge given in \eqref{integralBB}. In this case, the wealth process of the insider trader is modeled by
\begin{equation}
\begin{array}{ll}
dX_t &= (1-\pi_t)X_t\ r_t \ dt\ +\ \pi_tX_t (\mu_t\ dt +\sigma_td\widehat B_t ),
\\ \ X_0 &\in \mathbb{R}^+,
\end{array}
\label{dX_APO_BB}
\end{equation}

\noindent where we assume that $\mu_t, r_t, \sigma_t \in L^{\infty}([0,T])$, with $\sigma_t>0$, are deterministic and $\widehat B_t $ is given by \eqref{integralBB}.

\begin{theorem}\label{APO_BB_th}
Let $\pi_t \in L^{2}([0,T])$ be a deterministic function of time. Then, the optimal portfolio that maximizes $\mathbb{E}\left[\log (X_T/X_0) \right]$, where $X_t$ solves \eqref{dX_APO_BB}, is
\begin{equation}\nonumber
\pi_t^{*} = \dfrac{\mu_t - r_t}{\sigma_t^2} + \dfrac{b }{\sigma_t T}  ,\ \ \ t\in[0,T],
\end{equation}
and the corresponding value is
\begin{eqnarray}\nonumber
V_{T}^{\pi^*} &=& \displaystyle\int_0^{T} \left[ r_t + \dfrac{1}{2}  \left(  \dfrac{\mu_t-r_t}{\sigma_t} + \dfrac{b}{T} \right)^2 \  \right] dt.
\end{eqnarray}
\end{theorem}

\begin{proof}
From \eqref{integralBB} and \eqref{dX_APO_BB} we find

$$
\begin{array}{lll}
dX_t &= r_t(1-\pi_t)X_t\ dt\ +\ \pi_tX_t\ \left[\mu_t\ dt\ +\ \sigma_t\left(dW_t-\dfrac{\widehat B_t-b}{T-t}\ dt\right)\right]
\\\\&= r_t(1-\pi_t)X_t\ dt\ +\ \pi_tX_t\ \left[\mu_t\ dt\ -\ \sigma_t\ \dfrac{\widehat B_t-b}{T-t}\ dt + \sigma_tdW_t\right]
\\\\&= \left[r_t(1-\pi_t)\ +\ \pi_t\mu_t\ -\ \pi_t\sigma_t\dfrac{\widehat B_t-b}{T-t}\right]X_t\ dt\ +\ \pi_t\sigma_t X_t\ dW_t,
\end{array}
$$

\noindent which is an It\^o stochastic differential equation.
We may therefore use It\^o lemma for $\log X_t$ to obtain
\begin{equation}
d\log X_t = \left[r_t(1-\pi_t)\ +\ \pi_t\mu_t\ -\ \pi_t\sigma_t\frac{\widehat B_t-b}{T-t}-\frac{1}{2}\pi_t^2\sigma_t^2\right]dt\ +\ \pi_t\sigma_t\ dW_t.
\end{equation}

Taking the expectation of the integral form, we have that the value of the problem is given by
$$
\begin{array}{ll}
\mathbb{E}\left[\log (X_T/X_0) \right] &= \mathbb{E}\left[\displaystyle\int_0^T\left[r_t(1-\pi_t)\ +\ \pi_t\mu_t\ -\ \pi_t\sigma_t\frac{\widehat B_t-b}{T-t}-\frac{1}{2}\pi^2_t\sigma^2_t\right]dt \right.
\\\\& \ \  \ + \left. \displaystyle\int_0^T\pi_t\sigma_tdW_t \right]
\\\\&= \displaystyle\int_0^T\left[r_t(1-\pi_t)\ +\ \pi_t\mu_t\ +\ \pi_t\sigma_t\frac{b}{T}-\frac{1}{2}\pi^2_t\sigma^2_t\right]dt\  ,
\end{array}
$$
since $ \mathbb{E}\left[\displaystyle\int_0^T\pi_t\sigma_tdW_t \right]=0$ and $ \mathbb{E}\left[\dfrac{b - \widehat B_t}{T-t}\right] = \dfrac{b }{T}$. Then, we consider the maximization of the term
\begin{equation}
J_t^{\pi^{}} = r_t(1-\pi_t) + \pi_t\mu_t + \pi_t\sigma_t\ \dfrac{b }{T}-\frac{1}{2}\pi^2_t\sigma^2_t,
\end{equation}

The first derivative of $ J_t^{\pi}$ with respect to $\pi_t$ is 
\begin{equation}
J_t^{' \pi^{}} = \mu_t - r_t + \sigma_t\ \dfrac{b }{T}-\pi_t\sigma^2_t,
\end{equation}
and we equal the derivative to zero to find the optimal portfolio $\pi_t^{*} $ that maximizes the integrand
\begin{equation}
\mu_t - r_t + \sigma_t\ \dfrac{b }{T}-\pi_t^{*} \sigma^2_t=0.
\end{equation}

Then, the portfolio that maximizes the value of the problem is
\begin{equation}
\pi_t^{*} = \dfrac{\mu_t - r_t}{\sigma_t^2} + \dfrac{b }{\sigma_t T}  ,\ \ \ t\in[0,T].
\label{pi_APO_BB}
\end{equation}

To know the value of the problem, we compute
$$
\begin{array}{lllllll}
J_t^{\pi^*} &= r_t + (\mu_t-r_t)\pi_t^* + \pi_t^*\sigma_t \dfrac{b}{T} - \dfrac{1}{2} \pi_t^{*^2}\sigma_t^2
\\\\
&= r_t + (\mu_t-r_t) \Bigg( \dfrac{\mu_t-r_t}{\sigma_t^2} + \dfrac{b}{\sigma_t T} \Bigg) + \Bigg( \dfrac{\mu_t-r_t}{\sigma_t^2} + \dfrac{b}{\sigma_t T} \Bigg)\sigma_t\dfrac{b}{T}
\\\\
&- \dfrac{1}{2} \Bigg( \dfrac{\mu_t-r_t}{\sigma_t^2} + \dfrac{b}{\sigma_t T} \Bigg)^2 \sigma_t^2
\\\\
&= r_t + \dfrac{(\mu_t-r_t)^2}{\sigma_t^2} + \dfrac{(\mu_t-r_t)b}{\sigma_t T} + \dfrac{(\mu_t-r_t) b}{\sigma_t T} + \dfrac{b^2}{T^2}
\\\\
&- \dfrac{1}{2} \Bigg[ \dfrac{(\mu_t-r_t)^2}{\sigma_t^2} + 2 \dfrac{(\mu_t-r_t)b}{\sigma_t T} + \dfrac{b^2}{T^2} \Bigg]
\\\\
&= r_t + \Bigg( \dfrac{\mu_t-r_t}{\sigma_t} \Bigg)^2 + 2 \dfrac{(\mu_t-r_t)b}{\sigma_t T} + \dfrac{b^2}{T} - \dfrac{1}{2} \Bigg( \dfrac{\mu_t-r_t}{\sigma_t} \Bigg)^2 - \dfrac{(\mu_t-r_t)b}{\sigma_t T} - \dfrac{1}{2} \dfrac{b^2}{T^2}
\\\\
&= r_t + \dfrac{1}{2} \Bigg[ \dfrac{\mu_t-r_t}{\sigma_t} + \dfrac{b}{T} \Bigg]^2,\ \  \textup{for every}\ \ t\in[0,T].
\end{array}
$$
And the value of the problem is
\begin{equation}
V_{T}^{\pi^*} = \displaystyle\int_0^{T} \left[ r_t + \dfrac{1}{2}  \left(  \dfrac{\mu_t-r_t}{\sigma_t} + \dfrac{b}{T} \right)^2 \  \right] dt.
\label{value_APO_BB}
\end{equation}
Since all the parameters are deterministic, the statement follows.
\end{proof}

\begin{remark}
Note that these results are fully compatible with those in \cite{pikovsky1996anticipative}.
\end{remark}

\subsection{APO with the Russo-Vallois Forward Integration Method}
\label{APO_fw}
$ $

In this section, we present the portfolio optimization process with Russo-Vallois forward integration. First, we use deterministic portfolios and parameters
as in the previous section.

We now suppose that the driving process $B_t$ of $S_t$ in \eqref{risky_model} is given by $\bar{B_t } =W_t - (W_{T}-b) \frac{t}{T} $,  $t \in[0,T]$, as in (\ref{easyBB}),  and  $\mu_t, r_t, \sigma_t$, $\pi_t$ are deterministic, just like in the last case studied.
If these parameters are regular enough (what will not be necessarily assumed in the following),
then, by the self-financing property and Theorem 8.12 in \cite{di2009malliavin},
the wealth process of the insider trader is given (after substitution of the Brownian bridge as the forcing process) by the Russo-Vallois forward process
\begin{equation}
\begin{array}{ll}
d^-X_t&=r_t(1-\pi_t)X_t\ dt\ +\ \pi_tX_t\ [\mu_t\ dt\ +\ \sigma_td^-\bar{B}_t],\\
\ \ \ X_0 &\in \mathbb{R}^+,
\end{array}
\label{dX_APO_fw_det}
\end{equation}
where $d^-\bar{B}_t = d^-W_t-\frac{W_{T}-b}{T}\ dt , \ t\in[0,T]$. This is the model we will use for the portfolio optimization.

\begin{remark}
Note that \eqref{dX_APO_fw_det} is not an It\^o SDE due to its dependence on the future value of the Brownian motion, $W_T$, in the drift.
And thus the need to use Russo-Vallois forward integration in this model.
\end{remark}

\begin{theorem}\label{APO_fw_th}
Let $\pi_t \in L^{2}([0,T])$ be a deterministic function of time. Then the optimal portfolio that maximizes $\mathbb{E}\left[\log (X_T/X_0) \right]$, where $X_t$ solves \eqref{dX_APO_fw_det}, is
\begin{equation}\nonumber
\pi_t^{*} = \dfrac{\mu_t - r_t}{\sigma_t^2} + \dfrac{b }{\sigma_t T}  ,\ \ \ t\in[0,T],
\end{equation}
and the corresponding value is
\begin{equation}\nonumber
V_{T}^{\pi^*} = \displaystyle\int_0^{T} \left[ r_t + \dfrac{1}{2}  \left(  \dfrac{\mu_t-r_t}{\sigma_t} + \dfrac{b}{T} \right)^2 \  \right] dt.
\end{equation}
\end{theorem}

\begin{proof}
First, compute
$$
\begin{array}{llll}
dX_t &= r_t(1-\pi_t)X_t\ dt\ +\ \pi_tX_t\ \left[\mu_t\ dt\ +\ \sigma_t\left(d^-W_t-\frac{W_{T}-b}{T}\ dt\right)\right]
\\\\&= r_t(1-\pi_t)X_t\ dt\ +\ \pi_tX_t\ \left[\mu_t\ dt\ -\ \sigma_t\frac{W_{T}-b}{T}\ dt\ +\ \sigma_td^-W_t\right]
\\\\&= r_t(1-\pi_t)X_t\ dt\ +\ \pi_t\mu_tX_t\ dt\ -\ \pi_t\sigma_t\frac{W_{T}-b}{T}X_t\ dt\ +\ \pi_t\sigma_tX_t\ d^-W_t
\\\\&= \left[r_t(1-\pi_t)\ +\ \pi_t\mu_t\ -\ \pi_t\sigma_t\frac{W_{T}-b}{T}\right]X_t\ dt\ +\ \pi_t\sigma_tX_t\ d^-W_t.
\end{array}
$$

We apply It\^o formula for Russo-Vallois forward integrals (as we see in Theorem \ref{ito_lemma_fw}) to $\log X_t$ and find
\begin{equation}
d^-\log X_t = \left[r_t(1-\pi_t)+\pi_t\mu_t-\pi_t\sigma_t\frac{W_{T}-b}{T}-\frac{1}{2}\pi_t^2\sigma_t^2\right]dt+\pi_t\sigma_td^-W_t.
\end{equation}

Taking the expectation of the integral form, we have that the value of the problem is given by
$$
\begin{array}{ll}
\mathbb{E}\left[\log (X_T/X_0) \right] &= \mathbb{E}\left[\displaystyle\int_0^T\left[r_t(1-\pi_t)+\pi_t\mu_t-\pi_t\sigma_t \frac{W_{T}-b}{T} -\frac{1}{2}\pi^2_t\sigma^2_t\right]dt\right.
 +\left.\displaystyle\int_0^t\pi_t\sigma_td^-W_t \right]
\\\\
&= \displaystyle\int_0^T \left[r_t(1-\pi_t)+\pi_t\mu_t+\pi_t\sigma_t \frac{b}{T} -\frac{1}{2}\pi^2_t\sigma^2_t \right]dt,
\end{array}
$$
since $\sigma_t$ and $\pi_t$ are deterministic, and then
$$
\mathbb{E} \Bigg[ \int_0^T \pi_t\sigma_t d^-W_t \Bigg] = \mathbb{E} \Bigg[ \int_0^T \pi_t\sigma_t dW_t \Bigg] = 0.
$$

In this case, we have to maximize
\begin{equation}
J_t^{\pi^{}} = r_t(1-\pi_t) + \pi_t\mu_t + \pi_t\sigma_t\  \frac{b}{T} -\frac{1}{2}\pi^2_t\sigma^2_t.
\end{equation}
The value of $\pi_t, \ t\in[0,T]$, that maximizes $J_t^{\pi^{}}$ is
\begin{equation}
\pi_t^* = \dfrac{\mu_t-r_t}{\sigma_t^2}+\dfrac{b }{\sigma_t T}\ ,\ \ \ t\in[0,T].
\label{pi_APO_fw_deterministic}
\end{equation}
By direct substitution, we obtain the value of the problem, which is
\begin{equation}
V_{T}^{\pi^*}
= \mathbb{E}\displaystyle\int_0^{T} \left[ r_t + \dfrac{1}{2}  \left(  \dfrac{\mu_t-r_t}{\sigma_t} + \dfrac{b}{T} \right)^2 \  \right] dt,
\label{value_APO_fw_deterministic}
\end{equation}
and since the integrand is deterministic, the statement follows.
\end{proof}

\begin{remark}
Note that term $b/T$ in \eqref{value_APO_fw_deterministic} is equivalent to
$$
\mathbb{E} \left[ \dfrac{b-\bar B_t}{T-t} \right]= \dfrac{b- \mathbb{E}(\bar B_t)}{T-t}.
$$
This is a usual term we have to add to the ratio $ \dfrac{\mu_t-r_t}{\sigma_t}$, which represents the extra information we have at time $t$.
In the next section, allowing stochastic parameters, we will have in this term the current value of the driving process instead of its expected value.
\end{remark}

\begin{remark}
Note that both Theorems \ref{APO_BB_th} and \ref{APO_fw_th} lead to the same results. This highlights the consistency of both approaches.
\end{remark}


\noindent\textbf{APO with stochastic parameters}

Now we face the problem allowing $\sigma_t,\mu_t,r_t \in L^{\infty}([0,T] \times \Omega)$ and $\pi_t \in L^{2}([0,T] \times \Omega)$
be stochastic parameters. For that, we present a portfolio optimization that follows the procedure presented by
{\O}ksendal and R{\o}se in \cite{oksendal2017} (see also Chapter 8 in \cite{di2009malliavin}).
We note that the random variable $W_{T}$ obviously belongs to the Hida distribution space,
and it also has a Malliavin differentiable Donsker delta function (Proposition 7.2, \cite{di2009malliavin}).

We use an enlargement of filtration representing the insider information by
$$
\G:=\{\mathcal{G}_t:\mathcal{G}_t=\mathcal{F}_t\vee\sigma(W_{T}),\ \ t\in[0,T],\ \ T>0\},
$$
where $\mathcal{F}$ is the natural filtration (the filtration of an ordinary trader) and $\mathcal{F}_t\vee\sigma(W_{T}) := \sigma(\mathcal{F}_t  \cup \sigma(W_{T}))$, the smallest sigma algebra generated by $\mathcal{F}_t$ and $\sigma(W_{T})$. Inspired by the self-financing property,
we postulate the SDE
\begin{equation}
\begin{array}{ll}
d^-X_t&=r_t(1-\pi_t)X_t\ dt\ +\ \pi_tX_t\ [\mu_t\ dt\ +\ \sigma_td^-\bar{B}_t],\\
\ \ \ X_0 &\in \mathbb{R}^+,
\end{array}
\label{dX_APO_fw}
\end{equation}
as a model of the insider wealth process $X_t^\pi$,
where $\bar{B_t}$ is defined as in (\ref{easyBB}). Then, the solution of this SDE is
\begin{equation}
\begin{array}{ll}
\log (X_T/X_0) &= \displaystyle\int_0^t \left[ r_s(1-\pi_s)\ +\ \pi_s\mu_s\ -\ \pi_s\sigma_s\frac{W_{T}-b}{T}\ -\frac{1}{2}\pi_s^2\sigma_s^2\right]ds
\\\\
& \ \ \  + \displaystyle\int_0^t\pi_s\sigma_s\ d^-W_s.
\end{array}
\label{log_APO_fw}
\end{equation}

\begin{theorem}\label{APO_fw_th2}
Let $\pi_t \in L^{2}([0,T] \times \Omega)$ be $\mathcal{G}_t-$adapted and $\sigma_t,\mu_t,r_t \in L^{\infty}([0,T] \times \Omega)$
be $\mathcal{F}_t-$adapted. Moreover assume that $\sigma_ t \pi_t \in \mathbb{D}_0$ in the sense of Definition \ref{defdo}
and it is forward integrable in the strong sense.
Then, whenever it exists, the optimal portfolio that maximizes $\mathbb{E}\left[\log (X_T/X_0) \right]$, where $X_t$ solves \eqref{dX_APO_fw}, is
\begin{equation}\nonumber
\pi_t^* = \dfrac{\mu_t-r_t}{\sigma_t^2}+\dfrac{b-\bar B_t}{\sigma_t(T-t)}\ ,\ \ \ t\in[0,T),
\end{equation}
and the corresponding value is
\begin{equation}\nonumber
V_{T}^{\pi^*} = \mathbb{E}\displaystyle\int_0^{T} \left[ r_t + \dfrac{1}{2}  \left(  \dfrac{\mu_t-r_t}{\sigma_t} + \dfrac{b-\bar{B}_t}{T-t} \right)^2 \right] dt.
\end{equation}
\end{theorem}

\begin{proof}
Herein, we will be following the developments in~\cite{oksendal2017}.
To express the value of the problem, we use the Corollary \ref{corollary_forward} that relates the Russo-Vallois forward integral with the Malliavin derivative, along with the tower property, to find that
$$
\begin{array}{ll}
\mathbb{E} \log (X_T/X_0) &= \mathbb{E}\left[\displaystyle\int_0^T\left(r_t+(\mu_t-r_t)\pi_t - \sigma_t\frac{W_{T}-b}{T}\pi_t - \frac{1}{2}\pi^2_t\sigma^2_t + \sigma_tD_t\pi_t\right)dt\right]
\\\\ &= \mathbb{E}\left[\displaystyle\int_0^T \mathbb{E}\left[r_t+(\mu_t-r_t)\pi_t - \sigma_t\frac{W_{T}-b}{T}\pi_t - \frac{1}{2}\pi^2_t\sigma^2_t + \sigma_tD_t\pi_t\Big\vert{\mathcal{F}}_t\right]dt\right],
\end{array}
$$
where we have used the Fubini–Tonelli theorem. To proceed with the maximization with respect to $\pi_t$, we use the notation $\pi_t = f(t, Y)$, where $Y = W_T$. Then, we need to maximize
\begin{equation}
\begin{array}{ll}
J(f):=& \mathbb{E} \left[(\mu_t-r_t) f(t,Y) - \sigma_t \dfrac{Y-b}{T}f(t,Y) - \dfrac{1}{2}f^2(t,Y) \sigma^2_t \right.
\\
& \ \ \ \ + \left.\sigma_tD_t f(t,Y)\Big\vert{\mathcal{F}}_t\right].
\end{array}
\label{J_forward_malliavin}
\end{equation}
To that end, we follow~\cite{oksendal2017} to express $Y=W_{T}$ in terms of a Malliavin differentiable Donsker delta function $\delta_Y(y)$:
$$f(t,Y) = \int_\mathbb{R} f(t,y)\delta_Y(y)dy,$$
$$Y f(t,Y) = \int_\mathbb{R} y f(t,y)\delta_Y(y)dy,$$
$$f^2(t,Y) = \int_\mathbb{R} f^2(t,y)\delta_Y(y)dy,$$
$$D_s f(t,Y) = \int_\mathbb{R} f(t,y) D_s\delta_Y(y)dy.$$
We substitute these expressions in \eqref{J_forward_malliavin} to obtain
\begin{equation*}
\begin{array}{lllll}
J(f) = &\mathbb{E}\left[(\mu_t-r_t)\displaystyle \int_\mathbb{R} f(t,y)\delta_Y(y)dy - \sigma_t \int_\mathbb{R} \frac{y-b}{T} f(t,y)\delta_Y(y)dy \right.
\\\\ &- \left.\dfrac{1}{2}\sigma_t^2\displaystyle \int_\mathbb{R} f^2(t,y)\delta_Y(y)dy + \sigma_t \int_\mathbb{R} f(t,y)D_t\delta_Y(y)dy \Big\vert{\mathcal{F}}_t\right]
\\\\\ \ \ \ \ \ \ = & \displaystyle \int_\mathbb{R} \left\lbrace(\mu_t-r_t)f(t,y) \mathbb{E}\left[\delta_Y(y)\vert{\mathcal{F}}_t\right]
- \sigma_t \frac{y-b}{T} f(t,y) \mathbb{E}\left[\delta_Y(y)\vert{\mathcal{F}}_t\right]\right.
\\\\ & \left.\left.-\dfrac{1}{2}\sigma^2_tf^2(t,y)\mathbb{E}\left[\delta_Y(y)\vert{\mathcal{F}}_t\right] + \sigma_t f(t,y)\mathbb{E}\left[D_t\delta_Y(y)\vert{\mathcal{F}}_t\right]\right\rbrace dy\right.
\end{array}
\end{equation*}
\begin{equation*}
\begin{array}{lllll}
 \ \ \ \ = & \displaystyle\int_0^T\left\lbrace(\mu_t-r_t- \sigma_t \frac{y-b}{T}) f(t,y)\mathbb{E}\left[\delta_Y(y)\vert{\mathcal{F}}_t\right]\right.
\\\\ & \left.\left.-\dfrac{1}{2}\sigma^2_tf^2(t,y)\mathbb{E}\left[\delta_Y(y)\vert{\mathcal{F}}_t\right] + \sigma_t f(t,y)\mathbb{E}\left[D_t\delta_Y(y)\vert{\mathcal{F}}_t\right]\right\rbrace dy\right. ,
\end{array}
\end{equation*}
since the integral is of Bochner type.
To find the value $f^*(t,y)$ that maximizes $J(f)$, we write
$$
(\mu_t-r_t-\sigma_t \frac{y-b}{T})\mathbb{E}[\delta_Y(y)\vert{\mathcal{F}}_t] - \sigma_t^2 f^*(t,y)\mathbb{E}[\delta_Y(y)\vert{\mathcal{F}}_t] + \sigma_t \mathbb{E}[D_t\delta_Y(y)\vert{\mathcal{F}}_t] = 0,
$$
since $J$ is quadratic on $f$.
This implies that
$$
\begin{array}{ll}
f^*(t,y)&= \dfrac{(\mu_t-r_t-\sigma_t\frac{y-b}{T})\mathbb{E}[\delta_Y(y)\vert{\mathcal{F}}_t]+\sigma_t \mathbb{E}[D_t\delta_Y(y)\vert{\mathcal{F}}_t]}{\sigma^2_t \mathbb{E}[\delta_Y(y)\vert{\mathcal{F}}_t]}
\\\\ &= \dfrac{\mu_t-r_t}{\sigma^2_t} - \dfrac{y-b}{\sigma_t T} + \dfrac{\mathbb{E}[D_t\delta_Y(y)\vert{\mathcal{F}}_t]}{ \sigma_t \mathbb{E}[\delta_Y(y)\vert{\mathcal{F}}_t]},
\end{array}
$$
and therefore,
$$
\begin{array}{ll}
f^*(t,y)&= \dfrac{\mu_t-r_t}{\sigma^2_t} - \dfrac{y-b}{\sigma_t T} + \dfrac{y-W_t}{\sigma_t(T-t)},
\end{array}
$$
where we used that the quotient $\dfrac{\mathbb{E}[D_t\delta_Y(y)\vert{\mathcal{F}}_t]}{\mathbb{E}[\delta_Y(y)\vert{\mathcal{F}}_t]}$ equals to $\dfrac{y-W_t}{T-t}$ (see Section~2.2 in~\cite{draouil2015donsker}).

Then, the portfolio $\pi^*_t$ that maximizes $\mathbb{E}[\ln X^{\pi}(T)]$ is
$$
\begin{array}{ll}
\pi^*_t &= \dfrac{\mu_t-r_t}{\sigma_t^2} - \dfrac{W_{T}-b}{\sigma_t T} + \dfrac{W_{T}-W_t}{\sigma_t(T-t)}
\\\\ &= \dfrac{\mu_t-r_t}{\sigma_t^2} - \dfrac{T (W_{T}-b) -t(W_{T}-b) -T (W_{T}) +T W_t}{\sigma_t T (T-t)}
\\\\ &=
\dfrac{\mu_t-r_t}{\sigma_t^2} - \dfrac{T{\bar B_{t}-T b}}{\sigma_tT (T-t)},
\end{array}
$$
where we have used that $T\bar{B_t} = T W_t - t (W_{T}-b)$. Therefore,
\begin{equation}
\pi_t^* = \dfrac{\mu_t-r_t}{\sigma_t^2}+\dfrac{b-\bar B_t}{\sigma_t(T-t)}\ ,\ \ \ t\in[0,T).
\label{pi_APO_fw}
\end{equation}


To find the value of the portfolio, consider the equality
\[
J_t^{\pi^*} = \mathbb{E} \bigg[ r_t + (\mu_t - r_t)\pi_t^* + \sigma_t \pi_t^* \frac{b - W_T}{T} - \frac{1}{2} {\pi_t^*}^2 \sigma_t^2 + \sigma_t D_t \pi_t^* \bigg],
\]
to compute
$$V_{T}^{\pi^*} = \mathbb{E}\displaystyle\int_0^{T} \left( J_t^{\pi^*} \right) dt.$$
First, we write the portfolio in terms of the Wiener process
\[
\pi_t^* = \frac{\mu_t - r_t}{\sigma_t^2} + \frac{b - W_T}{\sigma_t T} + \frac{W_T - W_t}{\sigma_t (T - t)},
\]
to compute its Malliavin derivative
\[
D_t \pi_t^* = \frac{1}{\sigma_t} \left( \frac{1}{T - t} - \frac{1}{T} \right) = \frac{t}{\sigma_t T (T - t)}.
\]
Substituting these expressions in the argument of the expected value of $J_t^{\pi^*}$ we have that $ r_t + (\mu_t - r_t)\pi_t^* + \sigma_t \pi_t^* \frac{b - W_T}{T} - \frac{1}{2} {\pi_t^*}^2 \sigma_t^2 + \sigma_t D_t \pi_t^*$ equals to:
\begin{equation*}
\begin{array}{lll}
  & r_t + \left( \dfrac{\mu_t - r_t}{\sigma_t} \right)^2 + \dfrac{\mu_t - r_t}{\sigma_t} \dfrac{b - W_t}{T} + \dfrac{\mu_t - r_t}{\sigma_t} \dfrac{W_T - W_t}{T - t} \\\\
&+ \dfrac{\mu_t - r_t}{\sigma_t} \dfrac{b - W_t}{T} + \left( \dfrac{b - W_t}{T} \right)^2 + \dfrac{b - W_t}{T} \dfrac{W_T - W_t}{T - t} \\\\
&- \dfrac{1}{2} \left( \dfrac{\mu_t - r_t}{\sigma_t} \right)^2 - \dfrac{1}{2} \left( \dfrac{b - W_t}{T} \right)^2 - \dfrac{1}{2} \left( \dfrac{W_T - W_t}{T - t} \right)^2 \\\\
&- \dfrac{\mu_t - r_t}{\sigma_t} \dfrac{b - W_t}{T} - \dfrac{\mu_t - r_t}{\sigma_t} \dfrac{W_T - W_t}{T - t} - \dfrac{b - W_t}{T} \dfrac{W_T - W_t}{T - t} + \dfrac{t}{T(T - t)} \\\\
&= r_t + \dfrac{1}{2} \left( \dfrac{\mu_t - r_t}{\sigma_t} \right)^2 + \dfrac{\mu_t - r_t}{\sigma_t} \dfrac{b - W_t}{T} + \dfrac{1}{2} \left( \dfrac{b - W_t}{T} \right)^2 \\\\
&- \dfrac{1}{2} \left( \dfrac{W_T - W_t}{T - t} \right)^2 + \dfrac{t}{T(T - t)} \\\\
&= r_t + \dfrac{1}{2} \left( \dfrac{\mu_t - r_t}{\sigma_t} + \dfrac{b - W_t}{T} \right)^2 - \dfrac{1}{2} \left( \dfrac{W_T - W_t}{T - t} \right)^2 + \dfrac{t}{T(T - t)}.
\end{array}{}
\end{equation*}
Now, it is possible to calculate the value of $J_t^{\pi^*}$:
\[
\begin{array}{lll}
J_t^{\pi^*} & = r_t + \dfrac{1}{2} \mathbb{E} \left( \dfrac{\mu_t - r_t}{\sigma_t} - \dfrac{W_t}{T} + \dfrac{b}{T} \right)^2
- \dfrac{1}{2} \mathbb{E} \left( \dfrac{W_T - W_t}{T - t} \right)^2 + \dfrac{t}{T(T - t)} \\\\
& = r_t + \dfrac{1}{2} \mathbb{E} \left( \dfrac{\mu_t - r_t}{\sigma_t} - \dfrac{W_t}{T} \right)^2
+ \dfrac{1}{2} \left( \dfrac{b}{T}\right)^2 \\\\
& \ \ + \mathbb{E} \left( \dfrac{\mu_t - r_t}{\sigma_t} \right) \dfrac{b}{T} - \dfrac{1}{2(T-t)} + \dfrac{t}{T(T - t)} \\\\
& = r_t + \dfrac{1}{2} \mathbb{E} \left( \dfrac{\mu_t - r_t}{\sigma_t} - \dfrac{W_t}{T} + \dfrac{b}{T} + \dfrac{W_T - W_t}{T - t} \right)^2 \\\\
& = r_t + \dfrac{1}{2} \mathbb{E} \left( \dfrac{\mu_t - r_t}{\sigma_t} + \dfrac{b - b \, \frac{t}{T}}{T - t} + \dfrac{W_T \, \frac{t}{T} - W_t}{T - t} \right)^2 \\\\
& = r_t + \dfrac{1}{2} \mathbb{E} \left( \dfrac{\mu_t - r_t}{\sigma_t} + \dfrac{b - \bar{B_t}}{T - t} \right)^2.
\end{array}
\]
Hence, the value of the problem is
\begin{equation}
V_{T}^{\pi^*} = \mathbb{E}\displaystyle\int_0^{T} \left[ r_t + \dfrac{1}{2}  \left(  \dfrac{\mu_t-r_t}{\sigma_t} + \dfrac{b-\hat{B}_t}{T-t} \right)^2  \right] dt.
\label{value_APO_fw}
\end{equation}
\end{proof}

\begin{remark}
Theorem \ref{APO_fw_th2} recovers classical results of insider trading. However, it is apparently not consistent with Theorem \ref{APO_fw_th} in the sense that, if we assumed the deterministic character of the parameters, the present results do not reduce to the previous ones. This is not the consequence of a mistaken development: simply the assumptions are different. Precisely, the current portfolio process is anticipating (it depends on a future value of Brownian motion) and the former is a deterministic function. It is worth noting that the anticipative portfolio should achieve at least as good an outcome as the deterministic one, since the optimization problem under anticipation is less constrained.
\end{remark}

\begin{remark}
Note that, contrary to what happened previously, a divergence is present for $t=T$; this is the consequence of allowing stochastic portfolios.
\end{remark}

\subsection{APO with the Skorokhod Integration Method }
\label{APO_sk}
$ $

In this section, we present the portfolio optimization process using Skorokhod integration. To find a solution of the corresponding equation, we use anticipative Girsanov transformations, first, allowing the parameters to be stochastic and then, deterministic and finally constant to find a closed-form solution.


Again inspired by the self-financing property, we postulate the insider wealth is given by the process that solves
\begin{equation}
\begin{array}{ll}
\delta X_t = [\mu_t \pi_t + r_t(1-\pi_t)]X_td_t + \sigma_t\pi_tX_t\delta\bar{B}_t\\
\ \ X_0 \in \mathbb{R}^+,
\end{array}
\label{dX_MAPO_sk}
\end{equation}
where $$ \delta\bar{B}_t = \delta W_t -\dfrac{W_{T}-b}{T}dt,$$
$$\bar{B_t } = W_t - (W_{T}-b) \frac{t}{T} , \  t \in[0,T],$$
and $\delta$ denotes Skorokhod integration. Then
$$
\delta X_t  = \left[r_t(1-\pi_t)\ +\ \pi_t\mu_t\ -\ \pi_t\sigma_t\frac{W_{T}-b}{T}\right]X_t\ dt\ +\ \pi_t\sigma_tX_t\ \delta W_t.
$$
To meet the assumptions in section \ref{Anticipative} we need to substitute $W_T$ by $W_{T \wedge \tau}$, where $\tau$ is the stopping time $\tau = \inf\{t>0: |W_t| = m\sqrt{T} \}, \ m\in\N$, so it becomes a bounded random variable.
We do this in the hope that the limit $m \to \infty$ will yield a solution to problem \eqref{dX_MAPO_sk}; we will get back to this issue below.
For the model parameters, we use the assumptions in that section
so that its developments can be applied.

We use equation \eqref{sol_sk_stochastic} to find that the solution of \eqref{dX_MAPO_sk} is
\begin{equation}
X_t/X_0 = \exp\Bigg\{\int_0^t [\hat{\mu_s} \pi_s + r_s(1-\pi_s)] (U'_{s,t})ds \Bigg\}L'_t, \  t\in[0,T],
\label{sol_APO_sk_stochastic}
\end{equation}
with $L'_t$ as given in \eqref{Lt_stochastic}, where $\hat{\sigma_t} = \sigma_t \pi_t$ and $\hat{\mu_t} = (1-\pi_t)r_t + \pi_t \mu_t + \pi_t \sigma_t \dfrac{b-W_{T \wedge \tau}}{T}$.

When taking the expectation of the integral form, we have that the value of the problem is given by
$$
\begin{array}{ll}
\mathbb{E}[\log (X_T/X_0)] =& \mathbb{E} \displaystyle\int_0^T \Bigg[\mu_t \pi_t (U'_{t,T}) + r_t(1-\pi_t)(U'_{t,T})-\dfrac{1}{2}\sigma_t^2\pi_t^2(U'_{t,T})) \\\\
&-\dfrac{W_{T \wedge \tau}-b}{T}\sigma_t\pi_t(U'_{t,T}) \Bigg]dt
- \ \mathbb{E}\displaystyle\int_0^T \int_t^T (D_u\sigma_t\pi_t)(U'_{t,T})D_t[\sigma_u\pi_u(U'_{t,T})]du dt, \\
\end{array}
$$
where we used that $\mathbb{E}\displaystyle\int_0^T \sigma_t\pi_t(U'_{t,T})\delta W_t = 0$.

As $\sigma_t$ and $\pi_t$ are adapted to the filtration $\mathcal{F}_t$, then $D_u (\sigma_t \pi_t) = 0$ for $u > t$, so the expected value of the term
$$\int_0^T \int_t^T (D_u\sigma_t\pi_t)(U'_{t,T})D_t[\sigma_u\pi_u(U'_{t,T})] du dt$$
equals zero. 
Then, we apply the Fubini-Tonelli theorem and the tower property to compute
$$
\begin{array}{lll}
\mathbb{E}[\log (X_T/X_0)]
&= \mathbb{E}\Bigg[ \displaystyle\int_0^T \mathbb{E}\Bigg[\Big(\mu_t \pi_t (U'_{t,T}) + r_t(1-\pi_t)(U'_{t,T})-\dfrac{1}{2}\sigma_t^2\pi_t^2(U'_{t,T})
\\\\
&-\dfrac{W_{T \wedge \tau}-b}{T}\sigma_t\pi_t(U'_{t,T}) \Big\vert\mathcal{F}_t\Bigg]dt\Bigg]
\\\\
&= \mathbb{E}\Bigg[ \displaystyle\int_0^T \mathbb{E}\Bigg[\Big(\mu_t \pi_t + r_t(1-\pi_t)-\dfrac{1}{2}\sigma_t^2\pi_t^2
-\dfrac{W_{T \wedge \tau}(U'_{t,T})-b}{T}\sigma_t\pi_t \Big\vert\mathcal{F}_t\Bigg]dt\Bigg].
\end{array}\\
$$

From this step, the problem would be similar to the one formulated with the Russo-Vallois forward integration method, but with anticipative transformations, if the term $W_{T \wedge \tau}(U'_{t,T})$ had a closed expression; but it has not that in general. So, we need to further constraint the parameters to be constant, or at least deterministic, to achieve a closed-form solution.

\noindent\textbf{APO with deterministic parameters}

Now, we use that  $\mu_t, r_t, \sigma_t,\pi_t \in L^{\infty}([0,T])$ in the equation
\begin{equation}
\delta X_t = \Bigg[ (1-\pi_t)r_t + \pi_t\mu_t + \pi_t\sigma_t\dfrac{b-W_{T}}{T} \Bigg] X_tdt + \pi_t\sigma_t\ X_t\ \delta W_t, \ X_0 \in \mathbb{R}^+;
\label{dX_APO_sk_det}
\end{equation}
that is, we assume our parameters to be deterministic rather than stochastic. Note that, under this assumption and in those cases in which both solution
and parameters are regular enough, the equation follows as a consequence of the self-financing assumption and Theorem 8.20
in \cite{di2009malliavin}.

To solve the above equation, we use \eqref{sol_sk} taking $\hat{\sigma_t} = \sigma_t \pi_t$ and $\hat{\mu_t} = (1-\pi_t)r_t + \pi_t \mu_t + \pi_t \sigma_t \dfrac{b-W_{T}}{T}$. Since we need  $\hat{\mu_t}$ to be bounded, we consider the truncated version of the Brownian motion $W_{T\wedge \tau}$, where $\tau$ is the stopping time $\tau = \inf \{t>0: |W_t| = m\sqrt{T} \}, \ m\in\N$.
In this way, we restrict the Brownian motion to the state space $[-m\sqrt{T},m\sqrt{T}]$, where $m$ represents the number of standard deviations to be considered for the random variable $W_{t}, \ t \in [0,T]$. Note that this trick is the same employed in the previous section since the original
equation is not solvable with the methods of \cite{buckdahn1989}; but it becomes solvable after that substitution. Contrary to what happened before,
explicit solutions will become available now, and moreover the case $m \to \infty$ will become accessible to our analysis.

In the present case we find that $ U_{s,t}(W_{T \wedge \tau}) = W_{T \wedge \tau} - \displaystyle\int_0^{T \wedge \tau} \mathbb{I}_{[s,t]}(r) \pi\sigma \ du $, and we can use this fact to find a closed formula for the solution of \eqref{dX_APO_sk}, which is given by
\begin{equation}
\begin{array}{ll}
X_t^{(m)}/X_0 =& \exp\Bigg\{ \displaystyle\int_0^t \pi_s \sigma_s \delta W_s - \dfrac{1}{2} \int_0^t \pi_s^2\sigma_s^2ds
\\
&+ \displaystyle\int_0^t \Bigg[ (1-\pi_s)r_s + \pi_s\mu_s + \pi_s\sigma_s \dfrac{b-(W_{T \wedge \tau} - \int_s^t \pi_u\sigma_u du)}{T} \Bigg] ds \Bigg\}.
\end{array}
\label{sol_APO_sk_det0}
\end{equation}
The next result shows that this family, understood as a sequence in $m$, has a well-defined limiting behavior.

\begin{lemma}
Let $\mu_t, r_t, \sigma_t,\pi_t \in L^{\infty}([0,T])$ be deterministic parameters. Then, the pointwise limiting process $X_t:=\lim_{m \to \infty} X_t^{(m)}$ exists and has the explicit representation:
\begin{equation}
\begin{array}{ll}
\lim_{m \to \infty} X_t^{(m)} =& X_0 \exp\Bigg\{ \displaystyle\int_0^t \pi_s \sigma_s \delta W_s - \dfrac{1}{2} \int_0^t \pi_s^2\sigma_s^2ds
\\
&+ \displaystyle\int_0^t \Bigg[ (1-\pi_s)r_s + \pi_s\mu_s + \pi_s\sigma_s \dfrac{b-(W_{T} - \int_s^t \pi_u\sigma_u du)}{T} \Bigg] ds \Bigg\},
\end{array}
\label{sol_APO_sk_det}
\end{equation}
where the convergence takes place uniformly in $t$ almost surely.
\end{lemma}

\begin{proof}
Note that the expression of $X_t^{(m)}$ is stochastic only through its dependence on the Brownian motion. It appears twice: in the stochastic integral and in the integrand of the Lebesgue integral. The integrand of the stochastic integral is deterministic, therefore this integral actually reduces to a Paley-Wiener-Zygmund integral, which is just defined pathwise, see Chapter~4 in~\cite{evanssdes}. The dependence through the integrand of the Lebesgue integral is reduced to a single presence of $W_{T \wedge \tau}$. Therefore the process $X_t^{(m)}$ is simply defined pathwise for each $m \in \mathbb{N}$. Now fix a realization of Brownian motion; the dependence of this path of $X_t^{(m)}$ on $m$ is only through $W_{T \wedge \tau}$, which is in turn independent of time. Therefore, we can trivially commute the integration and limit to reduce the problem to find the limiting behavior of $W_{T \wedge \tau}$. Recalling the definition of $\tau$, by the almost sure continuity of Brownian motion it is clear that $\tau \to \infty$ as $m \to \infty$ with probability one, so $T \wedge \tau=T$ for each fixed $T$; then it follows that $\lim_{m \to \infty} W_{T \wedge \tau}=W_T$ almost surely. Since the limiting procedure is independent of the time variable, and the result holds for almost every path of $X_t^{(m)}$, then the statement follows.
\end{proof}

Such a good behavior makes $X_t$ a potential candidate to be the solution of
the original problem; indeed, the following result shows that it is the unique solution.

\begin{theorem}\label{euskorokhod}
Let $X_t$ be as defined above and let $\mu_t, r_t, \sigma_t,\pi_t \in L^{\infty}([0,T])$ be deterministic parameters. Then, the unique solution to
the linear Skorokhod stochastic differential equation \eqref{dX_APO_sk_det} is given by $X_t$.
\end{theorem}

\begin{proof}
First of all, note that the theory of \cite{buckdahn1989} cannot be directly applied since the drift of equation \eqref{dX_APO_sk_det} includes an unbounded
random variable (the Gaussian variable $W_T$). However, as already noted, the perturbed equation
\begin{equation}
\delta X_t = \Bigg[ (1-\pi_t)r_t + \pi_t\mu_t + \pi_t\sigma_t\dfrac{b-W_{T \wedge \tau}}{T} \Bigg] X_tdt + \pi_t\sigma_t\ X_t\ \delta W_t, \ X_0 \in \mathbb{R}^+,
\nonumber
\end{equation}
falls under the hypotheses of this theory for any fixed $m$, and therefore it follows that is possesses a unique solution, which is given by $X_t^{(m)}$.

On the other hand, if $\tau \ge T$, then $W_{T \wedge \tau} = W_{T}$, and the same result follows.
Now define
$$
\mathcal{M}:=\max_{0 \le t \le T} W_t, \qquad \mathfrak{m}:=\min_{0 \le t \le T} W_t;
$$
from Chapter 2 in \cite{ks1991} we know that
$$
\mathbb{P}\left(\left\{\mathcal{M} \ge m\sqrt{T}\right\}\right)= \sqrt{\frac{2}{\pi}} \int_{m}^{\infty} e^{-x^2/2}dx,
$$
and by symmetry
$$
\mathbb{P}\left(\left\{\mathfrak{m} \le -m\sqrt{T}\right\}\right)= \sqrt{\frac{2}{\pi}} \int_{-\infty}^{-m} e^{-x^2/2}dx.
$$
Therefore
\begin{eqnarray}
\nonumber
\mathbb{P}\left(\left\{W_{T \wedge \tau} = W_{T}\right\}\right) &=& \mathbb{P}\left(\left\{\mathcal{M} \le m\sqrt{T}\right\} \wedge \left\{\mathfrak{m} \ge -m\sqrt{T}\right\}\right) \\ \nonumber
&=& 1- \mathbb{P}\left(\left\{\mathcal{M} \ge m\sqrt{T}\right\} \vee \left\{\mathfrak{m} \le -m\sqrt{T}\right\}\right) \\ \nonumber
&\ge& 1- \mathbb{P}\left(\left\{\mathcal{M} \ge m\sqrt{T}\right\}\right) - \mathbb{P}\left(\left\{\mathfrak{m} \le -m\sqrt{T}\right\}\right) \\ \nonumber
&=& 1 - 2\sqrt{\frac{2}{\pi}} \int_{m}^{\infty} e^{-x^2/2}dx.
\end{eqnarray}
Consequently, for any fixed $m$, $X_t^{(m)}$ is the unique solution to equation \eqref{dX_APO_sk_det} with probability
$$
\mathbb{P}\left(\left\{X_t^{(m)} = X_t\right\}\right)=
\mathbb{P}\left(\left\{W_{T \wedge \tau} = W_{T}\right\}\right) \ge 1 - 2\sqrt{\frac{2}{\pi}} \int_{m}^{\infty} e^{-x^2/2}dx.
$$
Since $m$ is arbitrary, take the limit $m \to \infty$ to conclude that $X_t$ is the unique solution to \eqref{dX_APO_sk_det} almost surely.



\end{proof}

Once problem \eqref{dX_APO_sk_det} is solved, our aim is to compute the optimal portfolio for the expected logarithmic utility
\begin{eqnarray}\nonumber
V_T^{\pi} &:=& \mathbb{E}[\log(X_T^\pi/X_0^\pi)] \\ \nonumber
&=& \mathbb{E} \left\{ \int_{0}^{T} \left[ r_t + \left( \mu_t - r_t + \frac{\sigma_t \, b}{T} \right) \pi_t -\frac{\sigma_t^2}{2} \pi_t^2
+ \frac{\sigma_t}{T} \pi_t \left( \int_{t}^{T} \pi_s \, \sigma_s \, ds \right) \right] dt \right\} \\ \nonumber
&=& \int_{0}^{T} \left[ r_t + \left( \mu_t - r_t + \frac{\sigma_t \, b}{T} \right) \pi_t -\frac{\sigma_t^2}{2} \pi_t^2
+ \frac{\sigma_t}{T} \pi_t \left( \int_{t}^{T} \pi_s \, \sigma_s \, ds \right) \right] dt,
\end{eqnarray}
since all the parameters are deterministic.
For that, we will use the calculus of variations, which methods are legitimate under the current hypotheses with the additional assumption
of $\sigma_t>0$ (positivity of the volatility), see for instance Chapter~8 in~\cite{evanspdes}.
Following Section~8.1.2 in~\cite{evanspdes}, we define the real-valued function $\nu(\lambda):=V_T^{\pi}[\pi + \lambda \wp]$, where $\lambda \in \mathbb{R}$ and the function $\wp$ should be thought of as a perturbation of $\pi$. Now, we compute the first variation of $V_T^{\pi}$, which is the directional derivative (the Gateaux derivative) in the direction $\wp$, that is
$$
\frac{\delta V_T^{\pi}}{\delta \pi} := \left. \frac{d}{d \lambda} \nu(\lambda) \right|_{\lambda=0};
$$
it yields
\begin{eqnarray}\nonumber
\frac{\delta V_T^{\pi}}{\delta \pi} &=& \mathbb{E} \left\{ \int_{0}^{T} \left[ \mu_t - r_t +\frac{b}{T}\sigma_t -\sigma^2_t \pi_t + \frac{\sigma_t}{T}\int_{t}^{T}\pi_s \sigma_s \, ds
\right. \right. \\ \nonumber
& & \left. - \frac{\sigma_t}{T}\int_{t}^{T}\pi_s \sigma_s \, ds \right] \wp_t \, dt \\ \nonumber
& & \left. + \frac{1}{T} \left( \int_{0}^{T}\pi_s \sigma_s \, ds \right) \left( \int_{0}^{T}\wp_s \sigma_s \, ds \right) \right\}
\\ \nonumber
&=& \int_{0}^{T} \left[ \mu_t - r_t +\frac{b}{T}\sigma_t -\sigma^2_t \pi_t \right] \wp_t \, dt \\ \nonumber
& & + \frac{1}{T} \left( \int_{0}^{T}\pi_s \sigma_s \, ds \right) \left( \int_{0}^{T}\wp_s \sigma_s \, ds \right)
\\ \nonumber
&=& \int_{0}^{T} \left[ \mu_t - r_t +\frac{b}{T}\sigma_t -\sigma^2_t \pi_t
+ \frac{\sigma_t}{T} \left( \int_{0}^{T}\pi_s \sigma_s \, ds \right) \right] \wp_t \, dt,
\end{eqnarray}
after integration by parts.
Therefore, to find the critical points of $V_T^{\pi}$, we have to solve the equation
$$
\frac{\delta V_T^{\pi}}{\delta \pi}=0,
$$
which yields the Euler-Lagrange equation
$$
\mu_t - r_t +\frac{b}{T}\sigma_t -\sigma^2_t \pi_t
+ \frac{\sigma_t}{T} \left( \int_{0}^{T}\pi_s \sigma_s \, ds \right)=0.
$$
Dividing by $\sigma_t$ (which is positive by assumption) and rearranging terms leads to
$$
\sigma_t \pi_t = \frac{\mu_t - r_t}{\sigma_t} +\frac{b}{T} + \frac{1}{T} \left( \int_{0}^{T}\pi_s \sigma_s \, ds \right).
$$
By integrating this equality over $[0,T]$ we arrive at the compatibility condition
\begin{equation}\label{compatcond}
b= \int_{0}^{T} \frac{r_t -\mu_t}{\sigma_t} dt.
\end{equation}
The substitution of this value in the previous equation gives
$$
\sigma_t \pi_t - \frac{1}{T} \left( \int_{0}^{T} \sigma_s \pi_s \, ds \right) = \frac{\mu_t - r_t}{\sigma_t} -\frac{1}{T} \int_{0}^{T} \frac{\mu_t -r_t}{\sigma_t} dt;
$$
from this expression it is clear that the general solution of the integral equation is
\begin{equation}\label{optfamily}
\sigma_t \pi_t^* = \frac{\mu_t - r_t}{\sigma_t} + \mathcal{K} \Longleftrightarrow
\pi_t^* = \frac{\mu_t - r_t}{\sigma_t^2} + \frac{\mathcal{K}}{\sigma_t},
\end{equation}
for any arbitrary constant $\mathcal{K}$ (note that the integral equation follows from the compatibility condition~\eqref{compatcond}, but if this condition is not fulfilled, there are no critical points). The second variation (see Section~8.1.3 in~\cite{evanspdes}) is
$$
\frac{\delta^2{ V_T^\pi}}{\delta \pi^2} := \left. \frac{d^2}{d \lambda^2} \nu(\lambda) \right|_{\lambda=0}= -\int_{0}^{T} \sigma^2_t \, \wp^2_t \, dt + \frac{1}{T} \left( \int_{0}^{T} \sigma_s \, \wp_s \, ds \right)^2.
$$
Since the functional $V_T^\pi$ is quadratic on $\pi_t$, and by the Jensen inequality we have
$$
\left( \frac{1}{T} \int_{0}^{T} \sigma_s \, \wp_s \, ds \right)^2 \le \frac{1}{T} \int_{0}^{T} \left( \sigma_t \, \wp_t \right)^2 dt,
$$
where the inequality is strict except for constant integrands, so we deduce that the second variation is negative except through the direction $\wp_t \propto 1/\sigma_t$, along which it vanishes. Hence, we conclude that the one-parameter family of portfolios given in~\eqref{optfamily} yields the maximal logarithmic utility for all $\mathcal{K}$, and the resulting value is independent of this parameter. To find this value out, integrate by parts to get
\begin{eqnarray}\nonumber
V_T^{\pi} &=& \mathbb{E} \left\{ \int_{0}^{T} \left[ r_t + \left( \mu_t - r_t + \frac{\sigma_t \, b}{T} \right) \pi_t -\frac{\sigma_t^2}{2} \pi_t^2
\right] dt + \frac{1}{2T} \left( \int_{0}^{T} \pi_s \, \sigma_s \, ds \right)^2 \right\} \\ \nonumber
&=& \int_{0}^{T} \left[ r_t + \left( \mu_t - r_t + \frac{\sigma_t \, b}{T} \right) \pi_t -\frac{\sigma_t^2}{2} \pi_t^2
\right] dt + \frac{1}{2T} \left( \int_{0}^{T} \pi_s \, \sigma_s \, ds \right)^2.
\end{eqnarray}
Substituting $b$ by the integral expression in~\eqref{compatcond} and the portfolio by the optimal family
\begin{equation}
\begin{array}{lll}
\pi_t^* = \dfrac{\mu_t - r_t}{\sigma_t^2} + \dfrac{\mathcal{K}}{\sigma_t},
\end{array}
\label{pi_APO_sk_deterministic}
\end{equation}
yields for the value of the optimization problem
\begin{equation}
\begin{array}{lll}
V_T^{\pi_t^*} &= \displaystyle\int_{0}^{T} \! \left[ r_t + \dfrac12 \left( \dfrac{\mu_t - r_t}{\sigma_t} + \frac{b}{T} \right)^2 \right] dt \\ 
&= \displaystyle\int_{0}^{T} \left\{ r_t + \dfrac12 \left( \dfrac{\mu_t - r_t}{\sigma_t} + \dfrac{1}{T} \displaystyle\int_{0}^{T} \dfrac{r_s -\mu_s}{\sigma_s} ds \right)^2 \right\} dt,
\end{array}
\label{value_APO_sk_deterministic}
\end{equation}
which is, as anticipated, independent of the parameter $\mathcal{K}$.

The last functional recovers the result of the previous sections, in the case of deterministic (and bounded) parameters, but for the particular value of $b$ given by the compatibility condition~\eqref{compatcond}. Since we have to fix this value,
these developments have limited applicability; therefore, we will use different assumptions in the following, so we can go beyond the present result. Moreover, the upcoming calculations will show why the calculus of variations only allows access to the case characterized by this precise value of $b$, and why there is an infinite degeneracy of optimal portfolios in this case.


\noindent\textbf{APO with constant parameters}

To partially overcome the problem of limited applicability that we have just seen, from now on we assume that $\mu$, $r$, $\sigma$,
and $\pi$ are constants (i.e., time independent and deterministic). In this way, we plan to approach our problem without the constraint of fixing $b$ and also to improve the explicability of the previous results.

Therefore, we aim to solve the insider wealth equation
\begin{equation}
\begin{array}{lll}
\delta X_t &= [(1-\pi)r + \pi\mu]X_tdt + \pi\sigma X_t\delta\bar{B}_t\ ,\\
\  X_0 &\in \mathbb{R}^+,
\end{array}
\label{dX_APO_sk}
\end{equation}
where
$$\delta\bar{B}_t = \delta W_t + \dfrac{b-W_{T}}{T}dt,\ t\in[0,T].$$ Note that, in this particular case and prior to the substitution of the
Brownian bridge, the equation is not a consequence of the self-financing condition, but a consequence of the linearity of both the Skorokhod and Lebesgue integrals. Such a huge simplification comes from the assumption of the constancy of the parameters.

Substituting $\delta\bar{B}_t$ in~\eqref{dX_APO_sk}, we get
\begin{equation}\label{dX_APO_sk2}
\delta X_t = \Bigg[ (1-\pi)r + \pi\mu + \pi\sigma\dfrac{b-W_{T}}{T} \Bigg] X_tdt + \pi\sigma\ X_t\ \delta W_t.
\end{equation}
The solvability of this equation is guaranteed by Theorem~\ref{euskorokhod}. Note that, despite the apparent simplicity of equation~\eqref{dX_APO_sk2}, this case is not directly reducible to those treated with the It\^o or forward integrals. This is due to the presence of the non-adapted term $W_T$ in the drift of the equation, which makes the problem ill-posed in the sense of It\^o and genuinely different if considered as a forward stochastic differential equation. Hence, the specific treatment we carry out in this section.

Then, arguing as before, we find that the solution of~\eqref{dX_APO_sk2} is
\begin{equation}
\begin{array}{ll}
X_t/X_0 &= \exp\Bigg\{ \displaystyle\int_0^t \pi \sigma \delta W_s - \dfrac{1}{2} \int_0^t \pi^2\sigma^2ds
\\
& \quad + \displaystyle\int_0^t \Bigg[ (1-\pi)r + \pi\mu + \pi\sigma \dfrac{b-U_{s,t}(W_{T})}{T} \Bigg] ds \Bigg\}
\\\\
&= \exp\Bigg\{ \displaystyle\int_0^t \pi \sigma \delta W_s - \dfrac{1}{2} \int_0^t \pi^2\sigma^2ds
\\
& \quad + \displaystyle\int_0^t \Bigg[ (1-\pi)r + \pi\mu + \pi\sigma \dfrac{b-(W_T - \pi \sigma (t-s))}{T} \Bigg] ds \Bigg\},
\end{array}
\label{sol_APO_sk}
\end{equation}
since $ U_{s,t}(W_{T}) = W_{T} - \displaystyle\int_0^{T} \mathbb{I}_{[s,t]}(r) \pi\sigma \ du = W_T - \pi \sigma (t-s)$. Because of the constancy of the parameters, the integrals become straightforward, and this expression reduces to
\begin{equation}
X_t = X_0 \; \exp\Bigg\{ \pi \sigma W_t + \left[ (1-\pi)r + \pi\mu + \pi\sigma \frac{b-W_T}{T} - \dfrac{1}{2} \pi^2\sigma^2 \right] t + \dfrac{1}{2T} \pi^2\sigma^2 t^2 \Bigg\}
\end{equation}
for $0 \le t \le T$.
To compute the expected logarithmic utility at the horizon time $t=T$, first note that 
$$
X_T= X_0 \, \exp \left\{ \left[(1-\pi)r + \pi\mu \right] T + \pi\sigma b \right\},
$$
which is a deterministic quantity that resulted from the cancellation of both Brownian motions evaluated at the same time. Then, we can directly compute
$$
\begin{array}{rlrrrr}
V_T^{\pi}
:=& \mathbb{E}[\log(X_T^\pi/X_0^\pi)]
\\
=& \mathbb{E} \left\{ \left[(1-\pi)r + \pi\mu \right] T + \pi\sigma b \right\}
\\
=& r T + ( \mu - r)\pi T + \sigma b \pi.
\end{array}
$$
Clearly, as the expression for $V_T^{\pi}$ is affine in $\pi$, there exists neither a maximum nor a minimum. To address this issue, which was not encountered in the previous sections, we now introduce the no shorting condition. But prior to that, some comments are in order.

\begin{remark}
Note that there is exactly one exception to what we have just said. Indeed, if $b=(r-\mu)T/\sigma$, then $V^\pi_T=rT$ and the result is independent of $\pi$; in other words, we can take $\pi^* \in \mathbb{R}$ arbitrary. The situation is akin to that of the case with deterministic time-dependent parameters: maximization (which becomes trivial in this case of constant parameters) is only possible for the critical value of the fluctuation, and this case presents an infinite degeneracy of optimal portfolios. Now, by convention, we select $\pi^*=0$. This convention, in turn, fixes the value of $\mathcal{K}$ from the previous case, as we show below. 
\end{remark}

To find the value $\pi^*$ that maximizes $V_{T}(\pi)$, under no shorting, we consider the values of $b$ and the boundaries of $\pi$. We define $\theta := \dfrac{\mu - r}{\sigma}$. In consequence, we set,

if
$
b  \left\lbrace
\begin{array}{cclcl}
> - \theta T\ , & \textup{then} & \pi^* = 1 & \textup{and} & V_{T}( \pi^*) = \mu T + \sigma b, \\\
\le - \theta T\ , & \textup{then} & \pi^* = 0 & \textup{and} & V_{T}( \pi^*) = r T.
\end{array}
\right.\\
$

Therefore, the optimal portfolio under Skorokhod integration is
\begin{equation}
\pi^*=\mathbb{I}_{ \{b> - \theta T \} },
\label{pi_APO_sk}
\end{equation}
and the value of the problem in this case is
\begin{equation}
V_{T}( \pi^*) = r T + (\theta  \sigma T + \sigma b) \mathbb{I}_{ \{b> - \theta T \} }.
\label{value_APO_sk}
\end{equation}
Observe that the value of the problem in \eqref{value_APO_sk} is bounded by $r T$ and $ \mu T  + \sigma b$. This value, and the general result, are in deep contrast with all the results previously obtained in this work.

The strategy of the insider in this case consists in trading the risky asset if $b> - \theta T$ or the risk-free asset if $b \leq - \theta T$. In other words, the insider always chooses the winning bet. This does not only make full financial sense, but also deeply contrasts with our previous results in~\cite{bastons2018triple,elizalde2022chances,escudero2018,escudero2021optimal}, part of which point to the difficulty of using the Skorokhod integral in the present context. But the use of the logarithmic utility along with the no shorting condition sheds new light on this issue.

We finish this section with two remarks that allow to connect these results to those of the case with time-dependent, but deterministic, parameters.

\begin{remark}
The utility function~\eqref{value_APO_sk} takes the value $V_{T}( \pi^*) = r T$ when $b=-\theta T$. This corresponds, by the convention we have chosen, with $\pi^*=0$. Note that~\eqref{value_APO_sk_deterministic} generalizes~\eqref{value_APO_sk} for the analogous value of $b$ in the time-dependent case. Then, if we want a consistent convention in the case of~\eqref{pi_APO_sk_deterministic}, we need to choose
\begin{equation}
\mathcal{K} = \frac{1}{T} \int_0^T \dfrac{r_t -\mu_t}{\sigma_t} \, dt = \frac{b}{T},
\nonumber
\end{equation}
where $b$ in this formula is obviously given by~\eqref{compatcond}.
\end{remark}

\begin{remark}
The fact that maximization is impossible for $b \neq -\theta T$, since this makes the value function affine (with a non-vanishing slope), also has a reflection in the case of time-dependent parameters. If we choose, for example, the portfolio $\pi^\dagger=\mathcal{C}/\sigma_t$ with $\mathcal{C} \in \mathbb{R}$ arbitrary, and substitute it in the utility function, we find
\begin{eqnarray}\nonumber
V_T^{\pi} &=& \int_{0}^{T} \left[ r_t + \left( \mu_t - r_t + \frac{\sigma_t \, b}{T} \right) \pi^\dagger_t -\frac{\sigma_t^2}{2} \left(\pi^\dagger_t\right)^2
\right] dt + \frac{1}{2T} \left( \int_{0}^{T} \pi^\dagger_s \, \sigma_s \, ds \right)^2  \\ \nonumber
&=& \int_{0}^{T} \left[ r_t + \left( \frac{\mu_t - r_t}{\sigma_t} + \frac{b}{T} \right) \mathcal{C} \right] dt \\ \nonumber
&=& \int_{0}^{T} r_t \, dt + \left( \int_{0}^{T} \frac{\mu_t - r_t}{\sigma_t} \, dt + b \right) \mathcal{C}.
\end{eqnarray}
Therefore, if
$$
b \neq -\int_{0}^{T} \frac{\mu_t - r_t}{\sigma_t} \, dt,
$$
the function becomes unbounded in the limits $\mathcal{C} \to \pm \infty$, depending on the value of $b$. This implies that the no shorting condition needs to be applied to the case of time-dependent parameters as well, although the complete characterization of such a situation will be more involved.
\end{remark}

\subsection{Example of Performance}
\label{Algorithm}
$ $

So far, we have assumed that the insider has access to the future value of the Brownian motion driving the asset dynamics, rather than the terminal value of the asset itself. Although this may appear financially unnatural, since in practice insiders are more likely to have information about the asset price or other economic variables, such an assumption facilitates the use of anticipative stochastic calculus, for which the structure of the noise plays a central role, and its use is customary in the stochastic analytical literature. Importantly, when the asset follows a geometric Brownian motion with constant drift and volatility, knowing the future value of the Brownian motion is equivalent to knowing the future value of the asset: both are connected by a simple deterministic transformation. However, this equivalence might break down when these market parameters are not so simple, such as in the case of a time-dependent volatility. In such a situation, the relation between the asset price and the Brownian motion becomes more involved. Hence, we focus on the case of constant parameters to retain mathematical tractability, particularly because a closed-form solution for the Skorokhod strategy is only available under this assumption.

In order to demonstrate insider trading performance with the techniques described in this work, we exemplify the situation of a trader with privileged information. The features of the implementation are the following:

\begin{itemize}

\item Assumptions:\\
For ease of computation, we leave out the trading costs and the difference between the bid and ask prices, and we assume there is enough liquidity to trade.

\item Stock:\\
We use the 2-Year U.S. Treasury Note Future, a marketable risky instrument of the U.S. government traded in the Chicago Mercantile Exchange.

\item Parameters:\\
For the example we use constant parameters:
we compute $\sigma$ as the monthly standard deviation of the risk asset prices; we consider an average of the U.S. 3-Month Bond Yield to compute $r$; and we compute $ \mu$ from an ARIMA model of the log-return historic values $\log(S_t)-\log(S_{t-1})$.

\item Dates:\\
The trader starts to invest on March 03, 2019, and the horizon time is May 30, 2019. We assume the trader has privileged information about May 30.

\item Periodicity:\\
We consider daily prices at 14:00 (GMT-5). At that time, the trader computes the proportion of her wealth that should be in the risky asset ($\pi$) and the risk-free asset (1-$\pi$).

\end{itemize}

At time zero, the insider trader computes $b$ from the equation
$$\ S_{T} = S_0 \exp \{(\mu - \sigma^2/2)T+\sigma b\},$$
for the Russo-Vallois strategy and from 
$$\ S_{T} = S_0 \exp \{\mu T+\sigma b\}.$$
for the Skorokhod strategy.

It should be noticed that under this assumptions, is equivalent to know $S_T$ or $b$ since the market parameters, including the current value of the stock $S_0$, are known.

At time t (day t), the investor knows the value of $S_t$ and $r_t$. The wealth is computed as
\begin{equation}
X_t = X_{t-1} \exp\{(1-\pi_{t-1})r_{t-1}+\pi_{t-1}\log(S_t/S_{t-1})\},
\end{equation}
where:

\begin{itemize}
\item the portfolio of the Merton-honest strategy, \textit{i.e.}, without using insider information, is:
$\pi_t^{(ho)}=\dfrac{\mu_t-r_t}{\sigma^2_t}$.

\item the portfolio of the Russo-Vallois forward strategy is: $\pi^{(fw)}=\dfrac{\mu_t-r_t}{\sigma^2_t} + \dfrac{b}{\sigma_t T}$.

\item the portfolio of the Skorokhod strategy is: $\pi^{(sk)}=\mathbb{I}_{ \{b> - \theta T \} }$.
\end{itemize}

We show the wealth evolution $X_t$, $t \in [0,T]$ using three portfolios, the honest, the Russo-Vallois forward, and the Skorokhod portfolio in Figure~\ref{Fig.APO_TU}. We see that the wealth using the Skorokhod portfolio is bigger than using the Russo-Vallois portfolio. The wealth of the honest trader is far less than the previous ones not only at the end but practically in the whole period. It should be mentioned that this is a single-case example. In the following section we perform simulations with 50 thousand paths.

\begin{figure}[h]
\begin{center}
\includegraphics[
trim = 0cm 0cm 0cm 0cm, clip, 
width=12cm]{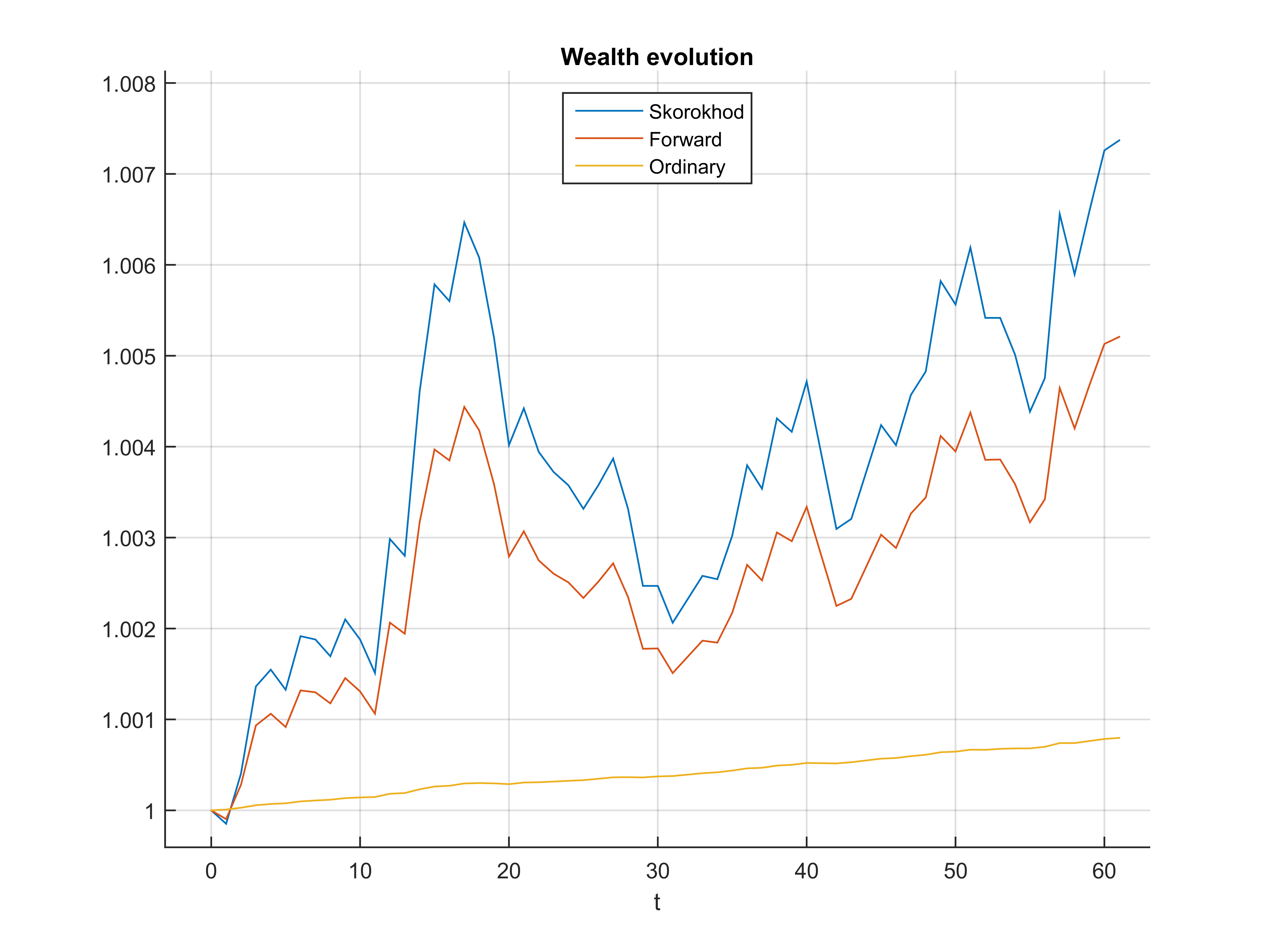}
\end{center}
\caption{Wealth evolution of the honest trader in yellow, the forward trader in red, and the Skorokhod trader in blue with the stock 2-Year U.S. Treasury Note Future.}
\label{Fig.APO_TU}
\end{figure}

As a final note, let us remark that the case with time-dependent parameters might give a different output. Although the portfolios corresponding to the first two strategies are formally time-dependent, we have assumed constant parameters, so in practice they are constant. The time-dependent case might need, in principle, a separate treatment, although this is complicated by the lack of explicit solutions in the Skorokhod case.

\subsection{Simulation}
$ $

In this section, we show how to perform a simulation of portfolio optimization from the point of view of both honest and insider trading. We consider two insider portfolios constructed with the Russo-Vallois forward integration approach and the Skorokhod integration one.

First, we simulate realizations of a conditioned Gaussian process
$$
(B_t \vert \ B_0=0,\ B_T=b),\ \ t\in [0,T].
$$
We start from the given extreme points $B_0$ and $B_T$. Then, recursively, given two values $B(u)$ and $B(t)$, we simulate the value $B(s)$ for $0<u<s<t<T$ as in \cite{glasserman2003monte}. The random vector
$[B(u)B(s)B(t)]^T$
is Gaussian with mean vector and covariance matrix:
$$
\begin{bmatrix}
B(u)\\
B(s)\\
B(t)
\end{bmatrix}
\sim N
\begin{pmatrix}
\begin{bmatrix}
0\\
0\\
0
\end{bmatrix}
,
\begin{bmatrix}
u & u & u\\
u & s & s\\
u & s & t
\end{bmatrix}
\end{pmatrix}
.
$$
Thus, the conditional distribution $(B(s)\mid B(u), B(t))$ is given by
$$
N \left( \dfrac{(t-s)B(u) + (s-u)B(t)}{t-u} , \dfrac{(s-u)(t-s)}{t-u} \right),
$$
and we can simulate $B(s)$ through the expression
$$
B(s) = \dfrac{(t-s)B(u) + (s-u)B(t)}{t-u} + \sqrt{\dfrac{(s-u)(t-s)}{t-u}}Z,
$$
where $Z\sim N(0,1)$.

In Figure~\ref{Fig.BB0} we illustrate multiple realizations of the algorithm that generates a Brownian bridge by conditioning it to start at a given point and return to a specified endpoint, in this case zero, a the fixed time horizon. This construction is particularly useful in modeling constrained random evolutions, where the terminal condition is known in advance.

\begin{figure}[h]
\begin{center}
\includegraphics[
trim = 0cm 0cm 0cm 0cm, clip, 
width=12cm]{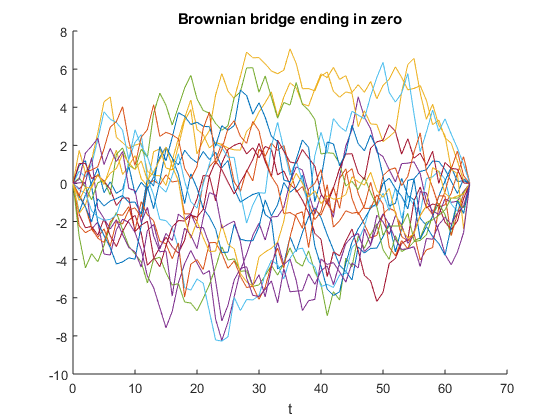}
\end{center}
\caption{Brownian bridges ending in zero.}
\label{Fig.BB0}
\end{figure}

We apply the simulation with different values of $b\sim N(e,64)$, so that each simulated path spans 64 trading days, which aligns with the example applied to the Treasury Bond Future in the previous section. In this simulation, we assume the financial instrument is represented by a geometric Brownian motion with initial value 100, $\mu=0.03$ and $\sigma=0.3$, and we consider a risk-free rate of $0.0027$ to compute the portfolio. For each path, we perform the algorithm described in Section \ref{Algorithm} which involve computing the value of a stochastic optimization problem under two different stochastic calculus frameworks: Russo Vallois forward and Skorokhod integration. Once the values under each integration approach are computed for every path, we repeat the entire simulation 50 thousand times to build distributions of terminal outcomes. This allows us to compare how the choice of integration strategy affects the overall distribution of the problem's solution.

In Figure~\ref{Fig.Distribution}, we compare different mean values of the distribution of $b$. At the top we use the expected value $e=1$, at the middle $e=0$, and at the bottom $e=-1$. We see that in all cases the mean value of the problem increases if the mean of $b$ increases and that, under Skorokhod strategy, the distribution of the value of the problem has bigger means than under Russo-Vallois forward strategy and the difference of variances are negligible.
Despite this difference, the resulting histograms display a remarkably similar overall shape, which suggests tat both approaches reflect a comparable underlying dynamic, even though their formulation differ.

\begin{figure}[ht]
\begin{subfigure}{}
  \centering
  \includegraphics[scale=.43]
  {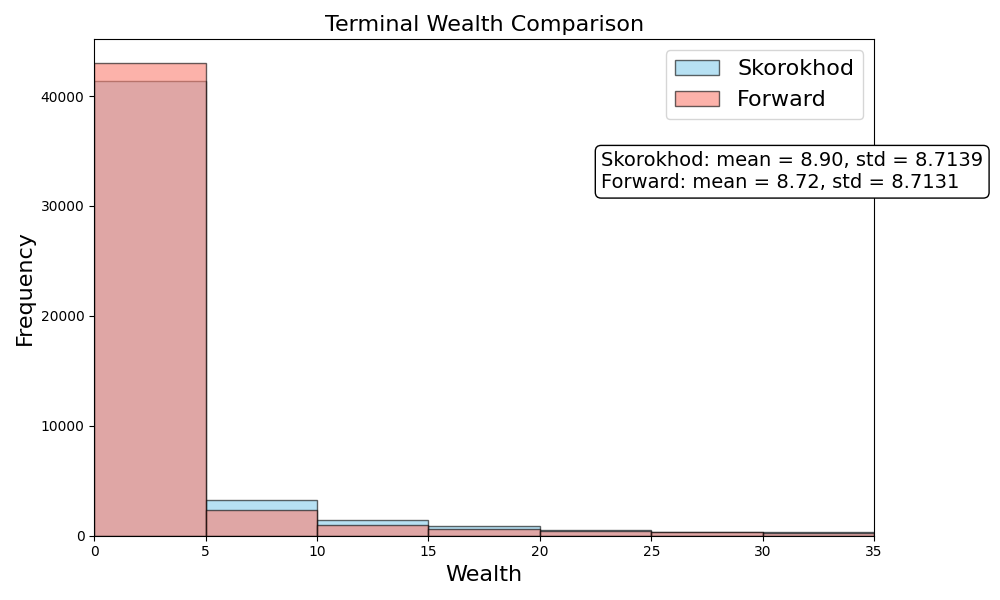}
  \label{jojojo}
\end{subfigure}
\begin{subfigure}{}
  \centering
  \includegraphics[scale=.43]
  {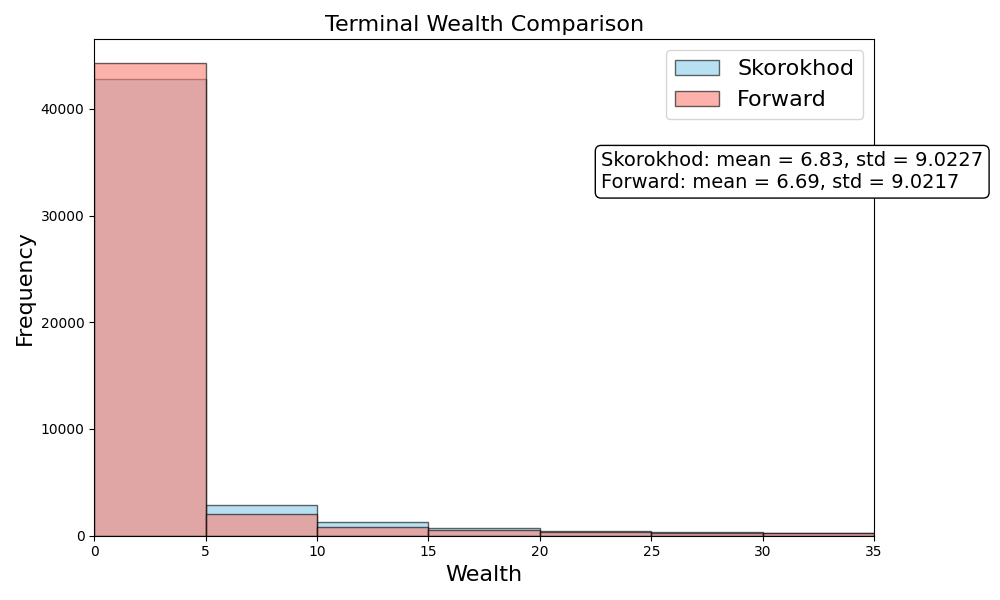}
\end{subfigure}
\begin{subfigure}{}
  \centering
  \includegraphics[scale=.43]
  {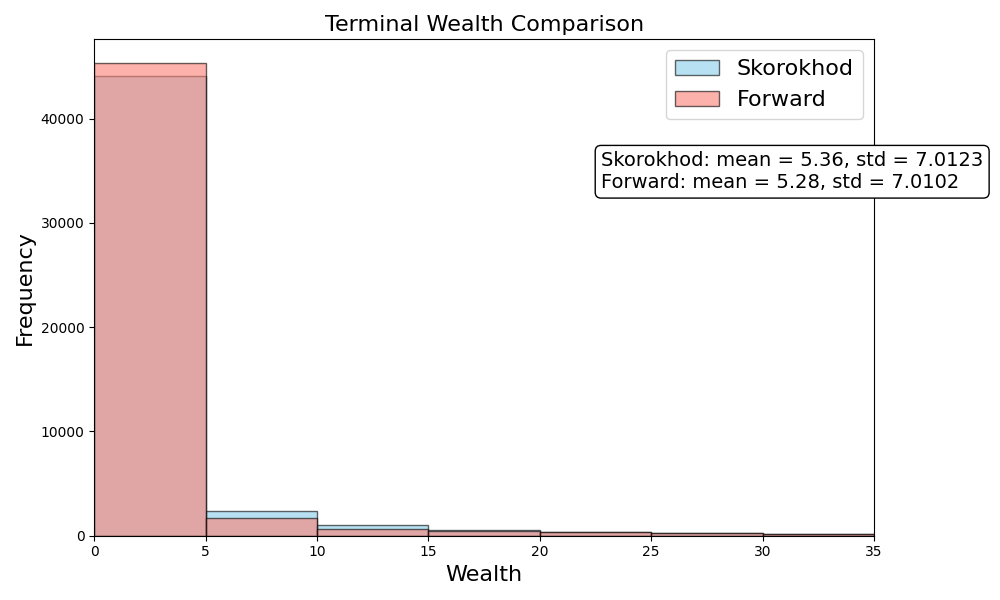}
\end{subfigure}

\caption{Histogram of $V_{T}^{\pi^*}(b)$ for different samples of $b$ under Russo Vallois forward and Skorokhod strategies, where $b\sim N(e,64)$. At the top we use the expected value $e=1$, at the middle $e=0$, and at the bottom $e=-1$.}
\label{Fig.Distribution}
\end{figure}

We remind the reader that we are employing constant parameters in our comparisons primarily due to a key limitation of the Skorokhod strategy: it admits a closed-form solution only under the assumption of constant coefficients. While this constraint does not apply to the other methods considered, such as the Russo-Vallois strategy, which could potentially yield better performance under time-varying parameters, we restrict the setting to maintain a fair and analytically tractable comparison, as we show in the next section.

\subsection{Properties and Comparison of Methods}
\label{Distribution}
$ $

Now we aim to describe the properties and compare the methods we have used through this manuscript for optimizing the insider portfolio.
We start with the case where the final value of the driving Brownian process, $b$, is constant. To obtain closed-form expressions, we consider the model parameters to be constant, too. 

In the previous sections, we obtained explicit expressions of optimal portfolios and values of the problem for which shorting was allowed, except in the final case (for which no-shorting permitted to interpret optimality). Therefore, in order to obtain a full comparison of the methods, we need to impose the no-shorting condition to all. We now show in detail how these expressions change when this condition is imposed.

The optimal portfolio for an honest trader is
$$
\pi^{(ho)} = \left( \left( \dfrac{\mu - r}{\sigma^2} \right) \wedge 1 \right) \vee 0.
$$
Therefore the value of the problem for this trader is
\begin{equation}\nonumber
V_T^{(ho)}
= r T + \dfrac{1}{2}\ \theta^2 T \ \mathbb{I}_{\big\{ \theta\in(0,\sigma) \big\}} + \Bigg(\theta\sigma - \frac{1}{2}\ \sigma^2\Bigg) T \ \mathbb{I}_{\big\{ \theta \geqslant \sigma \big\}}
= \left \{ \begin{matrix} r T & , & \theta \leqslant 0,
\\
r T + \frac{1}{2} \theta^2 T & , & 0 < \theta < \sigma,
\\
\mu T - \frac{1}{2}\sigma^2 T & , & \theta \geqslant \sigma,
\end{matrix}\right.
\end{equation}
where $\theta = \dfrac{\mu - r}{\sigma}$.

For the forward scheme, the optimal portfolio with constant parameters is
\begin{equation}
\begin{array}{ll}
\pi^{(fw)} &= \left( \left( \dfrac{\mu - r}{\sigma^2} + \dfrac{b}{T \sigma} \right) \wedge 1 \right) \vee 0
\\\\
& = \dfrac{\theta + \alpha}{\sigma} \mathbb{I}_{\left\lbrace \frac{\theta + \alpha}{\sigma} \in (0,1) \right\rbrace} + \mathbb{I}_{\left\lbrace \frac{\theta + \alpha}{\sigma} \geqslant 1 \right\rbrace}
\\\\
& = \dfrac{\theta + \alpha}{\sigma} \mathbb{I}_{\left\lbrace b \in (-\theta T , -\theta T + \sigma T) \right\rbrace} + \mathbb{I}_{\left\lbrace b \geqslant -\theta T + \sigma T \right\rbrace},
\end{array}
\label{FwdPiConst}
\end{equation}
where $\alpha = \dfrac{b}{T}$. And the value of the problem in terms of $\pi^{(fw)}$ is
$$
\begin{array}{ll}
V_T^{(fw)} &= \mathbb{E} \left[ \displaystyle\int_0^T \left( r + \theta \pi^* \sigma + \pi^* \sigma \dfrac{b}{T} - \dfrac{1}{2} \left( \pi^* \sigma \right)^2 \right) dt \right]
\\\\
& = r T + \theta \pi^* \sigma T + \pi^* \sigma b - \dfrac{1}{2} (\pi^* \sigma)^2 T.
\end{array}
$$

Substituting the result \eqref{FwdPiConst}, we have that
$$
V_T^{(fw)}
= \left\lbrace
\begin{array}{cll}
r T & , & b \leqslant - \theta T,
\\
r T + \dfrac{1}{2} (\theta + \alpha)^2 T & , & b \in (-\theta T , -\theta T + \sigma T],
\\
\mu T + \sigma b - \dfrac{1}{2} \sigma^2 T & , & b > -\theta T + \sigma T ,
\end{array}
\right.
$$
or, equivalently, using indicator functions
$$
\begin{array}{ll}
V_T^{(fw)} &= r T + \dfrac{1}{2} \left( \theta + \dfrac{b}{T} \right)^2 T\ \mathbb{I}_{\left\lbrace b \in (-\theta T , -\theta T + \sigma T] \right\rbrace} + \left( \theta \sigma T + \sigma b - \dfrac{1}{2} \sigma^2 T \right) \mathbb{I}_{\left\lbrace b > -\theta T + \sigma T \right\rbrace}.
\end{array}
$$

For the Skorokhod scheme, we already bounded the optimal portfolio:
$$
\pi^* = \mathbb{I}_{\{b>-\theta T\}},
$$
and recall that the value of the problem is
$$
V_{T}(\pi^*) = r T + (\theta\sigma T + \sigma b)\mathbb{I}_{\{ b>-\theta T \}}.
$$

Observe that if $b \leq -\theta T$, then $V_{T}^{(sk)} = V_{T}^{(fw)} = r T$, and in fact it is better to invest in the risk-free asset, given that we assume
$\theta>0$, which is a financially meaningful condition. Let us discuss the case $b >- \theta T$. Under this assumption, $\mu T  + \sigma b$ is bigger than $r T + \dfrac{1}{2} \left( \theta + \dfrac{b}{T} \right)^2 T$ if $b \le -\theta T + \sigma T$, then $V_{T}^{(sk)} > V_{T}^{(fw)}$, $ b \in (-\theta T , -\theta T + \sigma T]$. Finally, for the case $ b > -\theta T + \sigma T$, we also have that $V_{T}^{(sk)} > V_{T}^{(fw)}$ since $\mu T  + \sigma b$ is bigger than $\mu T + \sigma b - \dfrac{1}{2} \sigma^2 T$. Therefore, we conclude that the method involving Skorokhod integration yields a portfolio that is equally or more profitable for every value of $b$. The reader should observe that we are comparing the effect of the choice of integration rule on the final wealth obtained by each strategy. Since different integration rules lead to distinct price processes, the corresponding optimal strategies and wealth computations are inherently different. Each strategy optimizes under its own model dynamics, and thus the resulting wealth reflects the outcome of that specific modeling framework.

As an example, we perform a numerical comparison written in Matlab software of $V_T^{\pi^*}$ under Skorokhod and Russo-Vallois forward integration with the market parameters $\mu = .03$, $r = .02$, $\sigma = .30$, and $T=1$ to simplify the computations.

In Figure~\ref{Fig.Value}, we show $V_{T}^{\pi^*}$ with respect to $b$ in the interval $[{-\theta T}, -\theta T + \sigma T]$ under Skorokhod (blue line) and Russo-Vallois forward integration (red line). We also represent the investment of an honest trader (yellow line) without anticipating information, which value is constant with respect to $b$, and the safe investment (purple line), under the risk-free rate, which is also constant with respect to $b$.

\begin{figure}[ht]
\begin{subfigure}{}
  \centering
  \includegraphics[scale=.65]
  {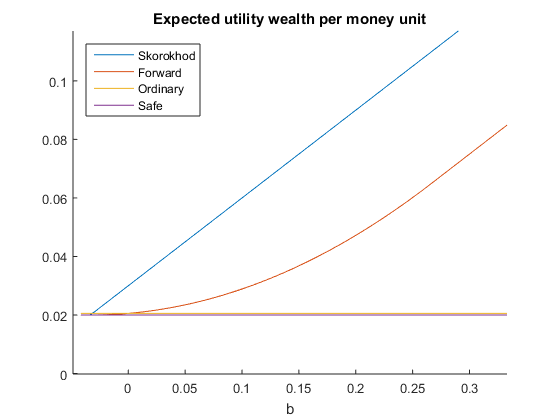}
  \label{fig:sub-first}
\end{subfigure}
\begin{subfigure}{}
  \centering
  \includegraphics[scale=.65]
  {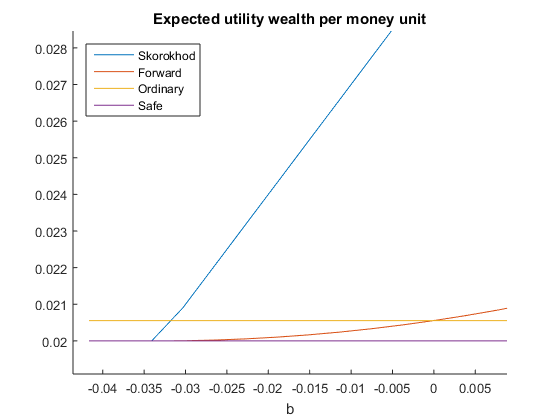}
  \label{fig:sub-second}
\end{subfigure}
\caption{$V_{T}^{\pi^*}(b)$ from $-\theta T$ to $-\theta T + \sigma T$ at the top and for negative values of $b$ at the bottom.}
\label{Fig.Value}
\end{figure}

So far, we have been assuming the constancy of $b$. If we considered $b$ as a Gaussian random variable (herein assumed to be independent of the Brownian motion), the value of the problem would become a random variable depending on $b$. In this sense, we can interpret the previous results as the conditional expectation of the insider utility given some fixed $b=\bar{b}$: $V_{T}^{\pi^*}(\bar{b}) = \mathbb{E}( \log (X_T/X_0) | b= \bar{b})$. Then, if we no longer
assume a fixed value of $b$, we can obtain the unconditional expectation by integrating the conditional expectation over the domain of $b$, due to its independence
of Brownian motion.

We have performed the computation of the unconditional expectation numerically, which is the value of the problem taking into account all the possible values of $b$.
In Figure~\ref{Fig.Normal}, we plot $V_{T}^{\pi^*}(b) \mathbb{P}(b)$ to visualize the area under this curve, that represents the integral for the unconditional expectation $V_{T}^{\pi^*}$ . We see that the Skorokhod curve of $V_{T}^{\pi^*}(b) \mathbb{P}(b)$ is above the Russo Vallois forward one, and therefore, the integral of the value of the problem is bigger under the Skorokhod scheme. We have assumed that $b \sim N(0,T)$ and have taken $T=1$.

\begin{figure}[ht]
\begin{subfigure}{}
  \centering
  \includegraphics[scale=.8]
  {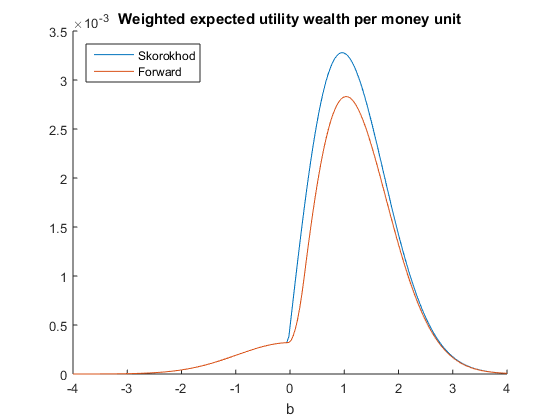}
  \label{fig:sub-first}
\end{subfigure}
\caption{$V_{T}^{\pi^*}(b) \mathbb{P}(b)$, with $b\sim \textup{N}(0,1)$.}
\label{Fig.Normal}
\end{figure}

Finally, we show the expressions of the unconditional expectations under forward and Skorokhod integration and under the assumption that $b$ is a Gaussian random variable. For the forward scheme, we find that
$$
\begin{array}{ll}
\mathbb{E} \left[ V_1^{(fw)} \right]
&= r T + \dfrac{1}{2} \mathbb{E} \left[ \left( \theta + b \right)^2 \mathbb{I}_{\left\lbrace b \in (-\theta , -\theta + \sigma ] \right\rbrace} \right]
\\\\
&  \ \ \ + \mathbb{E} \left[ \left( \theta \sigma  - \dfrac{1}{2} \sigma^2 + \sigma b \right) \mathbb{I}_{\left\lbrace b > -\theta + \sigma \right\rbrace} \right]
\end{array}
$$

$$
\begin{array}{ll}
 &= r + \dfrac{1}{2} \dfrac{1}{\sqrt{2\pi}} \displaystyle\int_{-\theta T}^{-\theta  + \sigma} \left( \theta + b \right)^2 e^{-b^2/2} db
\\\\
& \ \ \ + \dfrac{1}{\sqrt{2\pi}} \displaystyle\int_{-\theta + \sigma T}^{\infty} \left( \theta \sigma - \dfrac{1}{2} \sigma^2 + \sigma b \right) e^{-b^2/2} db
\\\\
&= \dfrac{1}{4} (\theta + 1) \text{erf} \left( \dfrac{\sigma - \theta}{\sqrt{2}} \right) + \dfrac{1}{4} (\theta + 1) \text{erf} \left( \dfrac{\theta}{\sqrt{2}} \right)
\\\\
& \ \ \ + \dfrac{1}{4} \sqrt{\dfrac{2}{\pi}}  \exp \left\{ -\dfrac{1}{2} (\theta^2 + \sigma^2) \right\} \left( (\theta - \sigma) \exp \left\{ \theta \sigma  \right\} - \theta \exp \left\{ \dfrac{\sigma^2}{2} \right\} \right)
\\\\
& \ \ \ + \dfrac{1}{4} \sigma (2 \theta - \sigma) \left( \text{erf} \left( \dfrac{\theta - \sigma}{\sqrt{2}} \right) + 1 \right) + \dfrac{ \sigma
}{\sqrt{2 \pi}}\exp \left\{  -\dfrac{1}{2} (\theta - \sigma)^2) \right\}.
\end{array}
$$

Under the Skorokhod scheme, we find that
$$
\begin{array}{ll}
\mathbb{E}\left( V_{1}^{(sk)} \right) &= r + \mathbb{E} \left[ (\theta\sigma   + \sigma b) \mathbb{I}_{\{b > -\theta\}} \right]
\\\\
&= r T + \dfrac{1}{\sqrt{2\pi}} \displaystyle\int^\infty_{-\theta } (\theta\sigma + \sigma b) e^{-b^2/2} db
\\\\
&= r  + \dfrac{\theta\sigma }{2}\left[ \text{erf} \left( \dfrac{\theta }{\sqrt{2}} \right) + 1 \right] + \dfrac{\sigma}{\sqrt{2\pi}} e^{-\theta^2 /2}.
\end{array}
$$

Note that these results do not reproduce the classical ones. The reason is that, under the present assumptions, the random variable $b$ is independent
of the Brownian motion, contrary to what has been classically assumed.

\section{Conclusions}

In this work we have studied the role of different notions of anticipating calculus on the maximization of the logarithmic utility of an insider trader.
It thus complements the previous studies in which this role was examined for risk-neutral traders. In \cite{escudero2018}, \cite{bastons2018triple}, and \cite{escudero2021optimal} it was shown that the Russo-Vallois forward integral produces intuitive results from the financial viewpoint, while the Skorokhod integral does not,
in the sense that it effectively transforms the insider trader into an uninformed one in terms of performance. In particular, in all these works, the Skorokhod integral provides
the insider with a wealth that is smaller than or equal to the wealth of the honest trader, and always strictly smaller than the wealth of the insider modeled
with the forward integral. However, the presence of the logarithmic utility changes this situation sharply. As we have shown herein, The Skorokhod insider attains the highest value in the case of constant parameters. Even if shorting is only forbidden for the Skorokhod insider, she still gets a higher value than the forward insider. In the case of time-dependent parameters, there is one particular case that can be solved and replicates the result of the Russo-Vallois forward integral, something without precedents in the case of risk-neutral traders. Moreover, for negative enough final values of the Brownian process, the ordinary trader can overcome both Skorokhod and forward integral insiders. A related feature, that the ordinary trader can overcome the insider one for certain paths in the case of time-dependent parameters, which could also be regarded as undesirable, was already studied in \cite{elizalde2022chances}, and identified as a consequence of the logarithmic utility. Now we have found that for certain driving Brownian paths, Skorokhod insiders cannot overcome ordinary traders; in particular, although the performance of Skorokhod insiders improves that of forward insiders under the logarithmic utility, it is unable to erase this feature.

Our results overall point to the fact that the interplay between stochasticity (through the introduction of a suitable stochastic integral) and nonlinearity
(through the introduction of a suitable utility function) still presents unexpected results within the realm of finance. A deeper understanding of the role
of Skorokhod integration in financial modeling could go through the computation of new explicit solutions to this type of stochastic differential equations,
something that has been quite elusive so far (in the present work, this translates into the necessity of assuming constant parameters and portfolios in order to fully approach the Skorokhod case); in fact, the solution to the Skorokhod differential equation present in Theorem \ref{euskorokhod} is new to the best of our knowledge. Also, the use of nonlinear utilities, which interacts well with classical stochastic calculus, yields new features that are not completely clear from a financial viewpoint when interrelated with anticipating calculus. Therefore, a possible future line of research is the development of a theory complementary to that of utilities and able to improve the models we have employed in this work. Our present results along with those in \cite{elizalde2022chances}
show that the uncritical use of these nonlinear utilities might shed consequences of difficult financial interpretation.

Another possible direction of future research is to establish the link between the present results and arbitrage opportunities, following for instance the line of \cite{corcuera2004}. And finally, it is important to remark that our results rely on mathematical assumptions on the portfolios that are of technical nature and due, as already mentioned, to the difficulty of dealing with Skorokhod calculus. Consequently, it would also be interesting to approach the problem we have herein analyzed with models more closely related to the financial practice. A possible way to achieve this is the one that departs from recent results on Skorokhod nonlinear stochastic differential equations \cite{leon2023stability}. Overall, it seems that extending the present research requires both modeling and mathematical developments that allow to account for those models.


\textbf{Data Availability Statement}\\
The information used to exemplify the performance of an insider using knowledge of the 2-Year U.S. Treasury Note Future was taken from https://www.cmegroup.com/markets/interest-rates/us-treasury/2-year-us-treasury-note.html.


\textbf{Acknowledgements}\\
This work has received funding from the European Union Horizon 2020 research and innovation programme under the Marie
Sk{\l}odowska-Curie grant agreement No. 777822.
It has been also partially supported by the Government of Spain (Ministerio de Ciencia e Innovaci\'on)
and the European Union through Projects PID2021-125871NB-I00, CPP2021-008644/
AEI/ 10.13039/ 501100011033/ Uni\'on Europea NextGenerationEU/ PRTR,
and TED2021-131844B-I00/
AEI/ 10.13039/ 501100011033/ Uni\'on Europea NextGenerationEU/ PRTR.


\bibliographystyle{acm}
\bibliography{APO_bib}

\end{document}